\theoremstyle{break}
\newtheorem{theorem}{Theorem}
\newtheorem{corollary}{Corollary}
\newtheorem{proposition}{Proposition}
\newtheorem{lemma}{Lemma}
\newtheorem{definition}{Definition}
\newtheorem{example}{Example}
\newtheorem{remark}{Remark}
\newcommand{\lran}{\longrightarrow}
\def\lint#1{\left[{#1}\right[}
\def\rint#1{\left]{#1}\right]}
\def\cint#1{\left[{#1}\right]}
\def\opint#1{\left]{#1}\right[}
\def\uint{[0,1]}
\def\ouint{\left]0,1\right[}
\def\prooftxt{\mbox{\large\sc proof: }}
\def\konproof{\rm\hspace*{\fill}$\Box$}
\newenvironment{proof}{\par\smallskip\noindent\prooftxt }%
{\konproof\par\vspace*{6pt}}
\newcounter{inst}
\newenvironment{mylist}{\begin{list}{(\roman{enumi})}{\usecounter{enumi}%
                        \setlength{\listparindent}{0pt}%
                        \setlength{\labelsep}{0.4em}%
                        \setlength{\labelwidth}{2.1em}
                        \setlength{\leftmargin}{2em}%
                        \setlength{\itemsep}{-0.8ex}%
                        \setlength{\topsep}{0.4ex}%
                        \setlength{\rightmargin}{0pt}}}{\end{list}}
\date{}
\author{
{\bf Andrea Mesiarov\' a-Zem\' ankov\' a} \\ \ Mathematical Institute, Slovak Academy of Sciences \\
Bratislava, SLOVAKIA \\
zemankova@mat.savba.sk
}
\title{\bf  Characterization of uninorms with continuous underlying t-norm and t-conorm by their set of  discontinuity points}
\begin{document}

\maketitle \thispagestyle{empty}

\begin{abstract} Uninorms with continuous underlying t-norm and t-conorm are discussed and properties of the set of discontinuity points of such a uninorm are shown. This set  is proved to be a subset of the graph of a special symmetric, surjective,  non-increasing set-valued function.
 A sufficient condition for a uninorm to have continuous underlying operations is also given.
 Several examples are  included.

{\bf Keywords:}  uninorm, ordinal sum, continuous t-norm, continuous t-conorm, set-valued function
\end{abstract}

\section{Introduction}
\label{sec1}

The (left-continuous) t-norms and their dual t-conorms have an indispensable role in many domains \cite{haj98,16,sug85}.
 Generalizations of t-norms and t-conorms that can model bipolar behaviour are uninorms
 (see \cite{FYR97,jacon,YR96}). The class of uninorms is widely used both in theory \cite{PT1,uniru} and in applications \cite{PA2,PA1}.
 The complete characterization of uninorms with continuous underlying t-norm and t-conorm has been in the center of the interest for a long time, however, only partial results were achieved (see \cite{D07,ordut,DRT,FB12,HL01,Lili,LLF15,Lilif,MesPet14,MMT03,MMRT15,RTBF10,Rut14}).

 In \cite{genuni} we have introduced ordinal sum of uninorms and in \cite{repord} we have characterized uninorms that are ordinal sums of representable uninorms.
  We would like to characterize all uninorms with continuous underlying functions and obtain a similar representation as in the case of t-norms and t-conorms. In this paper we will show that underlying operations of a uninorm $U$ are continuous if and only if $U$ is continuous on $\uint^2\setminus R,$ where $R$ is the graph of a special symmetric, surjective,  non-increasing set-valued function and $U$ is in each point $(x,y)\in \uint^2$
  either left-continuous or right-continuous, or continuous.
  We will then continue and in \cite{extord,supja} we will show that  each uninorm with continuous underlying t-norm and t-conorm can be decomposed into
  an  ordinal sum of semigroups related to representable uninorms, continuous Archimedean t-norms and t-conorms, internal uninorms and singleton semigroups.

In Section \ref{sec2} we will recall all necessary basic notions and results. We will characterize uninorms with continuous underlying functions via the properties of their set of discontinuity points (Section \ref{sec3}). We give our conclusions in Section \ref{sec5}.

\section{Basic notions and results}
\label{sec2}
   Let us now recall all necessary basic notions.

 A triangular norm  is a  function $T \colon \uint^2 \lran \uint $ which is commutative,
associative, non-decreasing in both variables and $1$ is its neutral element.
 \textcolor{red}{Note that in this paper we stick to the definition from \cite{KMP00}, where a non-decreasing function means an increasing function that
 need not to be strictly increasing.}
 Due to the associativity, $n$-ary form of any t-norm is uniquely given and  thus it can be extended to an aggregation function working on  $\bigcup_{n\in \mathbb{N}}\uint^n.$
Dual functions to t-norms are t-conorms. A triangular conorm  is a  function $S \colon \uint^2 \lran \uint $ which is commutative,
associative, non-decreasing in both variables and $0$ is its neutral element.
The duality between t-norms and t-conorms is expressed by the fact that from any t-norm $T$ we can obtain its dual t-conorm $S$ by the equation
$$S(x,y)=1-T(1-x,1-y)$$
and vice-versa.

\begin{proposition}[\cite{KMP00}]
Let $t\colon \uint\lran \cint{0,\infty}$ ($s\colon \uint\lran \cint{0,\infty}$) be a continuous strictly decreasing (increasing) function such that $t(1)=0$ ($s(0)=0$). Then the   operation  $T \colon \uint^2 \lran \uint $ ( $S \colon \uint^2 \lran \uint $)  given by
$$T(x,y)=t^{-1}(\min(t(0),t(x)+t(y)))$$
$$S(x,y)=s^{-1}(\min(s(1),s(x)+s(y)))$$
is a continuous t-norm (t-conorm). The function $t$ ($s$) is called an \emph{additive generator} of $T$ ($S$).
\end{proposition}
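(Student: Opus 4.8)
The plan is to verify in turn the defining properties of a t-norm — well-definedness, commutativity, the neutral element, monotonicity — and then continuity, treating associativity as the crux. The single observation that drives everything is the identity
$t(T(x,y))=\min(t(0),t(x)+t(y))$,
which follows by applying $t$ to the definition of $T$ and noting that the capped argument $\min(t(0),t(x)+t(y))$ always lies in $\Ran t=[0,t(0)]$, where $t\circ t^{-1}$ is the identity.

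First the routine properties. Well-definedness holds because $t(x),t(y)\in[0,t(0)]$ forces $\min(t(0),t(x)+t(y))\in[0,t(0)]=\Ran t$, so $t^{-1}$ is always applied within its domain. Commutativity is immediate from the commutativity of addition inside the expression. For the neutral element I would compute $T(x,1)=t^{-1}(\min(t(0),t(x)+t(1)))=t^{-1}(\min(t(0),t(x)))=t^{-1}(t(x))=x$, using $t(1)=0$ and $t(x)\le t(0)$. Monotonicity follows because, as $x$ increases, $t(x)$ decreases (since $t$ is strictly decreasing), hence $\min(t(0),t(x)+t(y))$ is non-increasing, and composing with the decreasing map $t^{-1}$ yields a non-decreasing function of $x$; commutativity then gives monotonicity in the second argument.

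The main work is associativity. Using the key identity I would compute $t(T(T(x,y),z))=\min(t(0),t(T(x,y))+t(z))=\min(t(0),\min(t(0),t(x)+t(y))+t(z))$. The crucial step is the elementary identity $\min(a,\min(a,b)+c)=\min(a,b+c)$, valid for all $a,b,c\in[0,\infty]$, which I would dispatch by a short case distinction on whether $b+c\le a$. This collapses the expression to the fully symmetric $\min(t(0),t(x)+t(y)+t(z))$; the same computation on the other grouping gives $t(T(x,T(y,z)))=\min(t(0),t(x)+t(y)+t(z))$. Since $t$ is strictly decreasing, hence injective, equality of the $t$-values forces $T(T(x,y),z)=T(x,T(y,z))$.

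Finally, continuity: $T$ is the composition $(x,y)\mapsto t(x)+t(y)\mapsto \min(t(0),\cdot)\mapsto t^{-1}(\cdot)$ of continuous maps, and is therefore continuous. The place that needs care — and the step I expect to be the genuine obstacle rather than a formality — is the behaviour when $t(0)=\infty$ (the strict, non-nilpotent case), where I would check that addition, the capping $\min(t(0),\cdot)$, and the pseudo-inverse $t^{-1}$ extended by $t^{-1}(\infty)=0$ all remain continuous in the topology of the extended half-line $[0,\infty]$; no $\infty-\infty$ indeterminacy can arise since only addition is involved. The dual statement for $S$ then follows either by the symmetric computation, or directly from the duality $S(x,y)=1-T(1-x,1-y)$ applied to the t-norm generated by $t(x)=s(1-x)$, for which $t^{-1}(u)=1-s^{-1}(u)$ and $t(0)=s(1)$.
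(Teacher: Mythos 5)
Your proof is correct and is the standard argument: the paper itself gives no proof of this proposition (it is quoted from \cite{KMP00}), and your reduction of associativity to the identity $\min(a,\min(a,b)+c)=\min(a,b+c)$ on $[0,\infty]$, together with the range check $\min(t(0),t(x)+t(y))\in\mathrm{Ran}(t)$ that makes $t\circ t^{-1}$ the identity where needed, is exactly how the cited reference handles it. The continuity discussion, including the extended-half-line topology in the strict case $t(0)=\infty$, is also handled correctly.
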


An additive generator of an \textcolor{red}{Archimedean} continuous t-norm $T$ (t-conorm $S$) is uniquely determined up to a positive multiplicative constant.
Each continuous t-norm (t-conorm) is equal to an ordinal sum of continuous Archimedean  t-norms (t-conorms). Note that a continuous t-norm (t-conorm) is Archimedean if and only if it has only trivial idempotent points $0$ and $1.$ A continuous Archimedean t-norm $T$ (t-conorm $S$) is either strict, i.e.,
strictly increasing on $\rint{0,1}^2$ (on $\lint{0,1}^2$), or nilpotent, i.e.,  there exists $(x,y)\in \opint{0,1}^2$ such that $T(x,y)=0$
($S(x,y)=1$). Moreover, each continuous Archimedean t-norm (t-conorm) has a continuous additive generator. More details on t-norms and t-conorms can be found in \cite{AFS06,KMP00}.

A uninorm  (introduced in \cite{YR96}) is a  function  $U\colon \uint^2 \lran \uint $  which is commutative,
associative, non-decreasing in both variables and have a neutral element $e\in \ouint$ (see also \cite{FYR97}). If we take a uninorm in a broader sense, i.e., if for a neutral element we have  $e\in \uint,$ then the class of uninorms covers also the class of t-norms and the class of t-conorms. In order to stress that we assume a uninorm with $e\in \opint{0,1}$ we will call such a uninorm \emph{proper}. For each uninorm the value $U(1,0)\in \{0,1\}$ is the annihilator of $U.$ A uninorm is called \emph{conjunctive} (\emph{disjunctive}) if $U(1,0)=0$ ($U(1,0)=1$). Due to the associativity, we can uniquely define $n$-ary form of any uninorm for any $n\in\mathbb{N}$ and therefore in some proofs we will use ternary form instead of binary, where suitable.

For each uninorm $U$ with the  neutral element $e\in \uint,$ the restriction of $U$ to $\cint{0,e}^2$ is a t-norm on $\cint{0,e}^2,$ i.e., a linear transformation of some t-norm $T_U$  on $\uint^2$
and the restriction of $U$ to $\cint{e,1}^2$ is a t-conorm  on $\cint{e,1}^2,$ i.e., a linear transformation of some t-conorm $S_U$  on  $\uint^2.$ Moreover, $\min(x,y)\leq U(x,y)\leq \max(x,y)$ for all
$(x,y)\in \cint{0,e}\times \cint{e,1}\cup \cint{e,1}\times \cint{0,e}.$ We will denote the set of all uninorms $U$ such that $T_U$ and $S_U$ are continuous by $\mathcal{U}.$

From any pair of a t-norm and a t-conorm we can construct the minimal and the maximal uninorm with the given
underlying functions.

\begin{proposition}[\cite{meszem_LS00}]
Let  $T \colon \uint^2 \lran \uint $ be a t-norm and  $S \colon \uint^2 \lran \uint $ a t-conorm and assume $e\in \uint.$ Then the two functions  $U_{\min},U_{\max} \colon \uint^2 \lran \uint $ given by
$$U_{\min}(x,y)=\begin{cases} e\cdot T(\frac{x}{e},\frac{y}{e}) &\text{if $(x,y)\in \cint{0,e}^2,$ } \\
e+ (1-e)\cdot S(\frac{x-e}{1-e},\frac{y-e}{1-e}) &\text{if $(x,y)\in \cint{e,1}^2,$ } \\
\min(x,y) &\text{otherwise}\end{cases}$$ and
$$U_{\max}(x,y)=\begin{cases} e\cdot T(\frac{x}{e},\frac{y}{e}) &\text{if $(x,y)\in \cint{0,e}^2,$ } \\
e+ (1-e)\cdot S(\frac{x-e}{1-e},\frac{y-e}{1-e}) &\text{if $(x,y)\in \cint{e,1}^2,$ } \\
\max(x,y) &\text{otherwise}\end{cases}$$ are uninorms. We will denote the set of all uninorms of the first type by $\mathcal{U}_{\min}$ and of the second type by $\mathcal{U}_{\max}.$ \label{mmset}
\end{proposition}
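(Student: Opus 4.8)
The plan is to verify the four defining properties of a uninorm for each of $U_{\min}$ and $U_{\max}$; I describe the argument for $U_{\min}$, the case of $U_{\max}$ being completely dual. Commutativity is immediate: each of the three pieces $e\,T(\frac{x}{e},\frac{y}{e})$, $e+(1-e)S(\frac{x-e}{1-e},\frac{y-e}{1-e})$ and $\min(x,y)$ is symmetric in $x,y$, and the three defining regions $\cint{0,e}^2$, $\cint{e,1}^2$ and the remaining set are each invariant under the swap $(x,y)\mapsto(y,x)$. For the neutral element I would compute $U_{\min}(x,e)$ directly: if $x\in\cint{0,e}$ then $(x,e)\in\cint{0,e}^2$ and $U_{\min}(x,e)=e\,T(\frac{x}{e},1)=x$ since $1$ is neutral for $T$; if $x\in\cint{e,1}$ then $(x,e)\in\cint{e,1}^2$ and $U_{\min}(x,e)=e+(1-e)S(\frac{x-e}{1-e},0)=x$ since $0$ is neutral for $S$. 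Hence $e$ is neutral.

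Monotonicity requires the most care among the non-associativity axioms. On the interior of each of the three regions the function is a composition of the non-decreasing maps $T$, $S$, $\min$ with linear rescalings, hence non-decreasing there. Across the boundary lines $x=e$ and $y=e$ I would use the two elementary inequalities $T(a,b)\le\min(a,b)$ and $S(a,b)\ge\max(a,b)$ (which follow from monotonicity together with $T(a,1)=a$, $S(a,0)=a$); these give $e\,T(\frac{x}{e},\frac{y}{e})\le\min(x,y)$ on $\cint{0,e}^2$ and $e+(1-e)S(\frac{x-e}{1-e},\frac{y-e}{1-e})\ge\max(x,y)$ on $\cint{e,1}^2$. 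Fixing one variable and letting the other cross $e$, these bounds show that the adjacent formulas either agree on the relevant boundary or that the value can only jump upward, so $U_{\min}$ is non-decreasing in each variable.

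The main obstacle is associativity, $U_{\min}(U_{\min}(x,y),z)=U_{\min}(x,U_{\min}(y,z))$, which I would establish by a case analysis on the positions of $x,y,z$ relative to $e$. First, if any of the three arguments equals $e$, the identity is trivial by neutrality, so I may assume $x,y,z\in L\cup H$ with $L=\lint{0,e}$ and $H=\rint{e,1}$. The eight sign patterns reduce, via commutativity and the resulting equivalence $A(x,y,z)\Leftrightarrow A(z,y,x)$ (reversal of the outer arguments), to the six patterns $LLL$, $HHH$, $LLH$, $LHL$, $LHH$ and $HLH$.

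In the pure cases $LLL$ and $HHH$ the inequalities above keep every intermediate value inside $\cint{0,e}$ (respectively $\cint{e,1}$), so after rescaling the identity collapses to the associativity of $T$ (respectively $S$). In each mixed case the presence of at least one low and at least one high argument forces the inner application of $U_{\min}$ into the region where $U_{\min}=\min$, and the clean way to close all four mixed patterns at once is the observation that both sides evaluate to the rescaled $T$-aggregate of exactly the low arguments present among $x,y,z$: a single low argument survives unchanged, two low arguments combine through $e\,T(\frac{\cdot}{e},\frac{\cdot}{e})$, and every high argument is discarded by $\min$. Since this aggregate does not depend on the grouping (by associativity of $T$), the two sides coincide. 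The dual statement, with $S$ and $\max$ in place of $T$ and $\min$, settles $U_{\max}$. The bookkeeping across the cases is routine but is the real content of the proof.
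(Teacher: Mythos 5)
The paper does not prove this proposition: it is imported from \cite{meszem_LS00} and stated without argument, so there is no in-paper proof to compare against; I can only assess your proposal on its own terms, and it is correct. Your direct verification of the four axioms works: the bounds $T(a,b)\le\min(a,b)$ and $S(a,b)\ge\max(a,b)$ are exactly what is needed to control the boundary behaviour for monotonicity, and your case analysis for associativity checks out (in the pure patterns everything stays in one block and reduces to associativity of $T$ or $S$; in the mixed patterns both groupings reduce to the rescaled $T$-aggregate of the low arguments, as you say). Two small remarks. First, your sentence that a mixed triple ``forces the inner application of $U_{\min}$ into the region where $U_{\min}=\min$'' is not literally true for the pattern $LLH$ evaluated as $U_{\min}(U_{\min}(x,y),z)$, where the inner pair lies in the $T$-block; it is the outer application that is resolved by $\min$, and your subsequent ``aggregate of the low arguments'' observation is what actually carries the argument, so this is only a slip of phrasing. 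Second, the route the paper itself implicitly prefers (see Theorem \ref{thcli} and Example \ref{examplo}) is to recognize $U_{\min}$ and $U_{\max}$ as Clifford ordinal sums of the semigroups $(\lint{0,e},T^*)$ and $(\cint{e,1},S^*)$ in the two possible orders; that yields commutativity and associativity for free and leaves only monotonicity and neutrality to check by hand. Your mixed-case bookkeeping is essentially an unwinding of that ordinal-sum argument, so the two approaches buy the same thing, with Clifford's theorem packaging the case analysis once and for all. You might also note the degenerate cases $e\in\{0,1\}$, where one of the rescaled blocks collapses and the formulas involving division by $e$ or $1-e$ must be read as vacuous; this affects neither approach materially.
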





Similarly as in the case of t-norms and t-conorms we can construct uninorms using additive generators (see  \cite{FYR97}).

\begin{proposition}[\cite{FYR97}]
Let $f\colon \uint\lran\cint{-\infty,\infty},$  $f(0)=-\infty,$ $f(1)=\infty$ be a continuous strictly increasing function.
Then a function $U \colon \uint^2 \lran \uint $ given by
$$U(x,y)=f^{-1}(f(x)+f(y)),$$ where $f^{-1}\colon \cint{-\infty,\infty}\lran \uint$ is an inverse function to $f,$
 \textcolor{red}{with the convention $\infty +(-\infty) = \infty$ (or $\infty +(-\infty) = -\infty$),}
 is a uninorm, which will be called a \emph{representable} uninorm.
\end{proposition}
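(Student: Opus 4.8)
The plan is to recognize $U$ as the transport, through the order isomorphism $f,$ of the additive structure on the extended real line $\cint{-\infty,\infty}.$ First I would record that, since $f$ is continuous and strictly increasing with $f(0)=-\infty$ and $f(1)=\infty,$ it is a bijection from $\uint$ onto $\cint{-\infty,\infty},$ so $f^{-1}$ is a well-defined, continuous, strictly increasing inverse, and in particular $f(f^{-1}(t))=t$ holds for \emph{every} $t\in\cint{-\infty,\infty},$ including the two infinite endpoints. This identity is precisely what will let me collapse the nested compositions that appear in the associativity check.

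Next I would isolate the algebraic heart of the matter: the pair $(\cint{-\infty,\infty},+),$ endowed with the stated convention $\infty+(-\infty)=\infty$ (respectively $-\infty$), is a commutative monoid with neutral element $0.$ Commutativity is immediate, since ordinary addition is commutative and the convention is symmetric. Associativity is the one genuinely delicate point and the step where I expect to spend the most care: plain extended-real addition is non-associative once $\infty+(-\infty)$ is forced to take a value, so I would prove it by cases according to how many of three summands equal $+\infty,$ equal $-\infty,$ or are finite. The key observation is that the chosen convention makes $+\infty$ an \emph{absorbing} element (any sum containing a $+\infty$ equals $+\infty$); hence if a $+\infty$ occurs among the three summands every bracketing yields $+\infty,$ if no $+\infty$ occurs but some $-\infty$ does then $-\infty$ absorbs and every bracketing yields $-\infty,$ and if all summands are finite associativity is classical. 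Under the alternative convention the roles of $+\infty$ and $-\infty$ are simply swapped.

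With this monoid established, the uninorm axioms transfer formally. Commutativity of $U$ follows from $U(x,y)=f^{-1}(f(x)+f(y))=f^{-1}(f(y)+f(x))=U(y,x).$ For associativity I would use $f(f^{-1}(t))=t$ to compute
$$U(U(x,y),z)=f^{-1}\bigl((f(x)+f(y))+f(z)\bigr)\quad\text{and}\quad U(x,U(y,z))=f^{-1}\bigl(f(x)+(f(y)+f(z))\bigr),$$
which coincide by associativity of the monoid. For the neutral element I would set $e:=f^{-1}(0),$ note $e\in\ouint$ because $f(0)=-\infty<0<\infty=f(1)$ and $f$ is strictly increasing, and check $U(x,e)=f^{-1}(f(x)+0)=f^{-1}(f(x))=x,$ which remains valid even for $f(x)=\pm\infty$ since $0$ is neutral for the monoid. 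Finally, monotonicity comes from monotonicity of the three ingredients: $f$ and $f^{-1}$ are non-decreasing, and extended addition is non-decreasing in each argument under either convention, so $x_1\le x_2$ forces $f(x_1)+f(y)\le f(x_2)+f(y)$ and hence $U(x_1,y)\le U(x_2,y);$ a short verification of the boundary cases involving $\pm\infty$ (where one factor is $0$ or $1$) completes this, and by commutativity the same holds in the second variable.
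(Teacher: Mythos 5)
The paper does not prove this proposition --- it is quoted verbatim from \cite{FYR97} as a known construction --- so there is no in-paper argument to compare against. Your proof is correct and is the standard one: you transport the commutative monoid $(\cint{-\infty,\infty},+)$ through the order isomorphism $f$, and you correctly identify the only delicate point, namely that the convention $\infty+(-\infty)=\infty$ (resp.\ $-\infty$) must be checked to yield an associative and monotone extended addition; your case analysis via absorbency of $+\infty$ (resp.\ $-\infty$) settles this, and the remaining axioms then follow formally from $f\circ f^{-1}=\mathrm{id}$ on all of $\cint{-\infty,\infty}$. The one tacit assumption worth making explicit is that the convention is applied symmetrically, i.e.\ $(-\infty)+\infty=\infty+(-\infty)$, without which commutativity of $U$ at the corner points $(0,1)$ and $(1,0)$ would not follow; with that stated, the argument is complete.
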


Note that if we relax the strict monotonicity of the additive generator then the neutral element will be lost and by relaxing the condition  $f(0)=-\infty,$ $f(1)=\infty$ the associativity will be lost (if $f(0)<0$ and $f(1)>0$). In  \cite{uniru} (see also \cite{jacon}) we can find the following result.

\begin{proposition}[\cite{uniru}]
Let $U\colon \uint^2 \lran \uint$ be a uninorm continuous everywhere on the unit square except of the two points $(0,1)$ and $(1,0).$ Then $U$ is representable.
\label{prouni}
\end{proposition}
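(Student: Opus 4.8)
The plan is to produce a continuous strictly increasing $f\colon\uint\lran\cint{-\infty,\infty}$ with $f(0)=-\infty$, $f(1)=\infty$ and $U(x,y)=f^{-1}(f(x)+f(y))$, which is precisely representability. Fix the neutral element $e\in\ouint$ and assume $U$ is conjunctive, i.e. $U(1,0)=0$; the disjunctive case follows by the dual transform $x\mapsto 1-x$. Since the only discontinuity points are the two named corners, neither of which lies in $\cint{0,e}^2$ nor in $\cint{e,1}^2$, the restrictions of $U$ to these two closed squares are continuous; hence the underlying t-norm $T_U$ and t-conorm $S_U$ are continuous, and each is an ordinal sum of continuous Archimedean summands.

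The first and most delicate step is to show that $T_U$ and $S_U$ are \emph{strict}. I argue by contraposition. If $T_U$ is not strict it is either nilpotent or has a nontrivial idempotent $c\in\ouint$, which produces in $U$ a point $a\in\opint{0,e}$ that is a zero divisor or a nontrivial idempotent. The guiding phenomenon is the minimal uninorm with $T_U=\min$, which is discontinuous along the whole segment $\{e\}\times\ouint$: there $\min$-behaviour on one block forces the one-sided limits of $U$ across the block boundary to disagree. The same mechanism applies to any idempotent or nilpotent sub-block, producing a discontinuity of $U$ at an interior, non-corner point. As this contradicts the hypothesis, $T_U$ must be strict, and the reflected argument gives $S_U$ strict. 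Consequently $T_U$ carries a continuous additive generator $t$ with $t(1)=0$, $t(0)=\infty$, and $S_U$ one with $s(0)=0$, $s(1)=\infty$.

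Next I define $f$ by gluing: set $f(x)=-t(x/e)$ on $\cint{0,e}$ and $f(x)=s\big((x-e)/(1-e)\big)$ on $\cint{e,1}$. Both pieces are continuous and strictly increasing, they agree at $e$ with value $0$ (since $t(1)=s(0)=0$), and $f(0)=-\infty$, $f(1)=\infty$; so $f$ is an order isomorphism of $\uint$ onto $\cint{-\infty,\infty}$ with $f(e)=0$. A direct check shows that the representable uninorm $U_f(x,y)=f^{-1}(f(x)+f(y))$ has neutral element $e$ and underlying operations $T_U,S_U$, so $U$ and $U_f$ agree on $\cint{0,e}^2\cup\cint{e,1}^2$. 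To compare them on the mixed region I use inverses: for $y\in\opint{e,1}$ continuity on $\{y\}\times\cint{0,e}$ together with $U(y,0)=0<e\le U(y,e)=y$ and the intermediate value theorem yields $z\in\opint{0,e}$ with $U(y,z)=e$, and since $U=U_f$ on the pure region this $z$ is also the $U_f$-inverse of $y$.

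Finally, for $x\in\opint{0,e}$ and $y\in\opint{e,1}$, writing $w=U(x,y)$, associativity gives $U(w,z)=U(U(x,y),z)=U(x,U(y,z))=U(x,e)=x$; together with the corresponding identity for $U_f$ and the generator relation already established on the pure blocks, this forces $f(w)=f(x)+f(y)$, i.e. $U(x,y)=U_f(x,y)$, with the subcases $w\le e$ and $w\ge e$ handled by iterating the inverse construction and strict monotonicity guaranteeing uniqueness of $w$. The symmetric computation on the reflected region then yields $U=U_f$ on all of $\uint^2$, so $U$ is representable. I expect the two genuine obstacles to be the strictness step of the second paragraph and the bootstrap in the mixed region — in particular excluding that $U(x,y)$ crosses $e$ inconsistently with the sign of $f(x)+f(y)$ — both of which are controlled by the continuity-except-at-corners hypothesis through the intermediate value theorem and strict monotonicity.
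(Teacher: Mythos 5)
The paper itself offers no proof of this proposition --- it is imported verbatim from the cited reference --- so your argument must stand on its own, and it has a genuine gap at the decisive step. You build $f$ by gluing $-t(x/e)$ on $\cint{0,e}$ with $s\bigl((x-e)/(1-e)\bigr)$ on $\cint{e,1}$, where $t$ and $s$ are \emph{arbitrary} continuous additive generators of $T_U$ and $S_U$. Each of these is determined only up to a positive multiplicative constant, and the two constants have to be matched: if $U$ is representable with generator $f$ and you replace $f$ by $2f$ on $\cint{0,e}$ only, you obtain another representable uninorm with the same underlying $T_U$ and $S_U$ but a different behaviour on the mixed region (in particular a different set $\{(x,y)\mid U(x,y)=e\}$). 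Consequently your inference ``since $U=U_f$ on the pure region this $z$ is also the $U_f$-inverse of $y$'' is false: the point $(y,z)$ lies in $\opint{e,1}\times\opint{0,e}$, not in a pure block, and agreement of $U$ with $U_f$ on the two pure blocks says nothing about their $e$-inverses. The entire bootstrap of your last paragraph rests on $f(z)=-f(y)$, which is exactly the unproved normalization. The standard repair is to define $f$ on $\rint{e,1}$ by $f(y)=t(\varphi(y)/e)$, where $\varphi(y)$ is the unique $z$ with $U(y,z)=e$ (uniqueness follows from $z_2=U(U(y,z_1),z_2)=U(z_1,U(y,z_2))=z_1$), and then to verify via associativity that $\varphi(U(y_1,y_2))=U(\varphi(y_1),\varphi(y_2))$, so that $y\mapsto t(\varphi(y)/e)$ really is a continuous additive generator of $S_U$; only after that does the gluing reproduce $U$ on the mixed region.

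A secondary weakness is the strictness step, which you compress into ``the same mechanism applies.'' For a nontrivial idempotent $a\in\opint{0,e}$ the argument does go through (internality of $U$ on $\{a\}\times\uint$ forces either $u_a$ or $u_1$ to jump at a non-corner point, using that $u_1$ omits the value $e$ because $U(x,1)=e$ would give $e=U(x,1,1)=U(e,1)=1$). But the nilpotent case is \emph{not} the same mechanism: there one needs zero divisors plus associativity --- if $U(x,y)=0$ with $x,y>0$ then $U(y,w)\geq e$ for any $w$ would give $0=U(x,y,w)\geq U(x,e)=x>0$, so $U(y,1)<e$, while continuity of $u_1$ everywhere except at $0$ together with $u_1$ omitting $e$ forces $U(y,1)>e$ for all $y>0$. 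Neither subcase is a corollary of the $U_{\min}$ picture you invoke. Both issues are fixable, but as written the proof has holes precisely where the real work lies.
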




For our examples we will use the following ordinal sum construction introduced by Clifford.

\begin{theorem}[\cite{cli}]
Let $A\neq \emptyset$ be a totally ordered set and $(G_{\alpha})_{\alpha\in A}$ with $G_{\alpha}=(X_{\alpha},*_{\alpha})$ be a family of semigroups.
Assume that for all
$\alpha,\beta\in A$ with
$\alpha<\beta$ the sets $X_{\alpha}$
 and $X_{\beta}$ are either disjoint or that $X_{\alpha} \cap X_{\beta}=\{x_{\alpha,\beta}\},$  where
$x_{\alpha,\beta}$ is both the neutral element of $G_{\alpha}$
 and the annihilator of $G_{\beta}$ and where for each $\gamma\in A$ with
$\alpha<\gamma<\beta$ we
have $X_{\gamma}=\{x_{\alpha,\beta}\}.$ Put $X=\bigcup\limits_{\alpha\in A}X_{\alpha}$
 and define the binary operation $*$ on $X$ by
$$x*y=\begin{cases} x*_{\alpha} y &\text{if $(x,y)\in X_{\alpha} \times X_{\alpha},$} \\
x &\text{if $(x,y)\in X_{\alpha} \times X_{\beta}$ and $\alpha<\beta,$} \\
y &\text{if $(x,y)\in X_{\alpha} \times X_{\beta}$ and $\alpha>\beta.$} \\ \end{cases}$$
Then $G =(X,*)$ is a semigroup. The semigroup $G$ is commutative if and only if for each
 $\alpha \in A$ the semigroup
$G_{\alpha}$
 is commutative. \label{thcli}
\end{theorem}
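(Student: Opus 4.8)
The plan is to verify in turn that the operation $*$ is well defined, that it is associative, and finally the commutativity equivalence; the neutral/annihilator hypothesis on the shared points $x_{\alpha,\beta}$ will be the ingredient that glues everything together. The genuinely delicate point is well-definedness: the three cases in the definition partition $X\times X$ only if every element of $X$ lies in a single $X_\alpha$, and this fails exactly at the shared points. So I would suppose $x\in X_\alpha\cap X_\beta$ with $\alpha<\beta$, i.e. $x=x_{\alpha,\beta}$, fix an arbitrary $y$, and show that evaluating $x*y$ (and $y*x$) via $x\in X_\alpha$ agrees with evaluating it via $x\in X_\beta$. This splits into the subcases $y\in X_\alpha$, $y\in X_\beta$, and $y\in X_\gamma$ with $\gamma<\alpha$, $\gamma>\beta$, or $\alpha<\gamma<\beta$. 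In each subcase the equality follows from exactly one defining property of $x_{\alpha,\beta}$: for $y\in X_\alpha$ neutrality of $x$ in $G_\alpha$ forces $x*_\alpha y=y$, matching the cross term $y$ coming from $(x,y)\in X_\beta\times X_\alpha$; for $y\in X_\beta$ annihilation of $x$ in $G_\beta$ forces $x*_\beta y=x$, matching the cross term $x$ coming from $(x,y)\in X_\alpha\times X_\beta$; and for $\alpha<\gamma<\beta$ the hypothesis $X_\gamma=\{x_{\alpha,\beta}\}$ forces $y=x$, so every reading yields $x$. Since a point may lie in several $X_\gamma$ simultaneously, it suffices to compare the interpretations pairwise, as equality of all pairs forces all readings to coincide.

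Next I would prove associativity by a case analysis on the relative order of the home indices of three elements $x\in X_\alpha$, $y\in X_\beta$, $z\in X_\gamma$; well-definedness lets me evaluate $*$ using any convenient containing index. The guiding observation is that whenever two operands lie in distinct semigroups the result is the operand of smaller index and stays in that lower-indexed set, so higher-indexed operands are absorbed. Writing $m=\min(\alpha,\beta,\gamma)$, if exactly one operand attains $m$ then both $(x*y)*z$ and $x*(y*z)$ collapse to that operand; if two operands attain $m$ then the third is absorbed and both sides reduce to the single $*_m$-product of the two minimal operands; and if $\alpha=\beta=\gamma$ then both sides equal the iterated $*_\alpha$-product, so the identity is exactly associativity of $G_\alpha$.

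The main obstacle is the bookkeeping in this step: one must check that the absorption is consistent with the boundary behaviour, which is again guaranteed by the neutral/annihilator roles of the shared points established in the well-definedness step, and that no choice of home index for a shared operand alters either side.

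Finally, for the commutativity equivalence I would argue both directions directly. For the forward direction, restricting $*$ to $X_\alpha\times X_\alpha$ gives exactly $*_\alpha$, so commutativity of $G$ immediately yields commutativity of each $G_\alpha$. For the converse, I would observe that the cross terms of the definition are symmetric under interchanging $(x,y)$ with $(y,x)$: swapping the operands swaps the roles of $\alpha$ and $\beta$ and hence interchanges the two cross cases. Thus commutativity of every $*_\alpha$ on the diagonal blocks, together with this built-in off-diagonal symmetry, gives $x*y=y*x$ for all $x,y\in X$.
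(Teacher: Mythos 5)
The paper does not prove Theorem \ref{thcli} at all: it is imported verbatim from Clifford \cite{cli} and used as a black box for the ordinal-sum examples, so there is no in-paper argument to compare yours against. Judged on its own, your proposal is a correct and complete outline of the standard proof. You correctly identify the one genuinely delicate point, namely that the three defining cases overlap at a shared point $x_{\alpha,\beta}\in X_\alpha\cap X_\beta$, and your resolution is the right one: neutrality in the lower-indexed semigroup makes the $X_\alpha$-reading agree with the cross case $(x,y)\in X_\beta\times X_\alpha$, annihilation in the higher-indexed semigroup makes the $X_\beta$-reading agree with the cross case $(x,y)\in X_\alpha\times X_\beta$, and the singleton condition on intermediate $X_\gamma$ disposes of the remaining subcase; pairwise comparison of readings then suffices even when a point lies in more than two summands. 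The associativity argument via $m=\min(\alpha,\beta,\gamma)$ is also sound --- in each of the three configurations (one, two, or three operands attaining $m$) both bracketings reduce to the same element of $X_m$, with the order of the two minimal operands preserved so that no commutativity of $G_m$ is smuggled in --- and the commutativity equivalence is immediate from restriction to the diagonal blocks together with the built-in symmetry of the off-diagonal cases. No gaps.
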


Therefore in our examples the commutativity and the associativity of the corresponding ordinal sum uninorm will follow from Theorem \ref{thcli}. Monotonicity and the neutral element can be then
easily checked by the reader.

Further we will use the following transformation.
For any $0\leq a < b\leq c < d\leq 1,$  $v\in \cint{b,c},$ and a uninorm $U$  with the neutral element $e\in \uint$ let $f\colon \uint \lran \lint{a,b} \cup \{v\} \cup \rint{c,d}$ be
 given by
\begin{equation}f(x)=\begin{cases}  (b-a)\cdot \frac{x}{e} + a  &\text{if $x\in \lint{0,e} ,$ } \\
v  &\text{if $x=e ,$ } \\
d - \frac{(1-x)(d-c)}{(1-e)} &\text{otherwise.} \end{cases}\label{utra}\end{equation}
Then $f$  is linear on $\lint{0,e}$ and on $\rint{e,1}$ and thus it is a
piece-wise linear isomorphism of $\uint$ to $(\lint{a,b} \cup \{v\} \cup \rint{c,d})$  and a  function
$U^{a,b,c,d}_v\colon (\lint{a,b} \cup \{v\} \cup \rint{c,d})^2 \lran (\lint{a,b} \cup \{v\} \cup \rint{c,d})$ given by
\begin{equation}U^{a,b,c,d}_v(x,y)=f(U(f^{-1}(x),f^{-1}(y)))\label{unitr}\end{equation} is \textcolor{red}{an operation on $(\lint{a,b} \cup \{v\} \cup \rint{c,d})^2$ which is commutative, associative, non-decreasing in both variables (with respect to the standard order) and $v$ is its neutral element.}

\begin{example}
 Assume $U_1 \in \mathcal{U}_{\min}$ and $U_2\in \mathcal{U}_{\max}$ with respective neutral elements $e_1,e_2.$
Then $U_1$  is an ordinal sum of semigroups $G_{\alpha} = (\lint{0,e},T^*_{U_1})$ and $G_{\beta} = (\cint{e,1},S^*_{U_1})$ with
$\alpha < \beta,$ where $T^*_{U_1} = U_1\vert_{\cint{0,e_1}^2}$ and $S^*_{U_1} = U_1\vert_{\cint{e_1,1}^2}.$  Similarly,  $U_2$  is an ordinal sum of semigroups $G_{\alpha} = (\cint{0,e},T^*_{U_2})$ and $G_{\beta} = (\rint{e,1},S^*_{U_2})$ with
$\alpha > \beta.$
 If all underlying operations are continuous then the set of discontinuity points of $U_1$ is equal to the set $S_1=\{e\} \times \rint{e,1} \cup \rint{e,1} \times \{e\}$ and the set of discontinuity points of $U_2$ is equal to the set
$S_2=\{e\} \times \lint{0,e} \cup \lint{0,e} \times \{e\}.$ Both uninorms can be seen on Figure \ref{figst1}.
 \label{examplo}
\end{example}

\begin{figure}[htb]  \hskip2cm
\begin{picture}(150,150)
\put(0,0){ \line(1,0){150} }
\put(0,0){ \line(0,1){150} }
\put(150,150){ \line(-1,0){150} }
\put(150,150){ \line(0,-1){150} }
\put(0,75){ \line(1,0){150} }
\put(75,0){ \line(0,1){150} }
\linethickness{0.5mm}
\put(77,75){ \line(1,0){73} }
\put(75,77){ \line(0,1){73} }
\put(79,75){\circle{4}}

\put(30,30){$T_{U_1}^*$}
\put(110,110){$S_{U_1}^*$}
\put(110,30){$\min$}
\put(30,110){$\min$}
\end{picture}
\hskip2cm
\begin{picture}(150,150)
\put(0,0){ \line(1,0){150} }
\put(0,0){ \line(0,1){150} }
\put(150,150){ \line(-1,0){150} }
\put(150,150){ \line(0,-1){150} }
\put(0,75){ \line(1,0){150} }
\put(75,0){ \line(0,1){150} }
\linethickness{0.5mm}
\put(0,75){ \line(1,0){73} }
\put(75,0){ \line(0,1){73} }
\put(79,75){\circle{4}}

\put(30,30){$T_{U_2}^*$}
\put(110,110){$S_{U_2}^*$}
\put(110,30){$\max$}
\put(30,110){$\max$}
\end{picture}
\caption{The uninorm $U_1$ (left) and the uninorm $U_2$ (right) from Example \ref{examplo}.  The bold lines denote the points of discontinuity of $U_1$ and $U_2.$} \label{figst1}
\end{figure}

More detailed discussion on the ordinal sum construction for uninorms can be found in \cite{extord}.

\section{Characterization of uninorms $U\in \mathcal{U}$ by means of  special set-valued functions}
\label{sec3}

In this section we will show  that for a uninorm $U$ we have $U\in \mathcal{U}$ if and only if $U$ is continuous on
$\uint^2\setminus R,$ where $R$ is the graph of a special symmetric, surjective,  non-increasing  set-valued function $r$ and $U$ is in each point $(x,y)\in \uint^2$ either left-continuous, or right-continuous, or continuous.
In the first part we will focus on the necessity part, i.e., we will show that each  uninorm $U\in \mathcal{U}$ is continuous on
$\uint^2\setminus R,$ where $R$ is the graph of  some symmetric, surjective,  non-increasing  set-valued function $r$ (Theorem \ref{mufu}).
We will also show that $U\in \mathcal{U}$ implies that $U$ is in each point $(x,y)\in \uint^2$ either left-continuous, or right-continuous, or continuous (Theorem \ref{thco}).

\subsection{The necessity part}
\label{sec3.1}

The following lemmas and propositions are necessary for the proof of Theorem \ref{mufu} and \ref{thco}.

\begin{lemma}[\cite{repord}]
Each uninorm $U\colon \uint^2\lran \uint,$ $U\in \mathcal{U},$ is continuous in $(e,e).$
\end{lemma}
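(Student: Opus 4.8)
The plan is to exploit the defining property of the class $\mathcal{U}$, namely that the underlying t-norm $T_U$ and t-conorm $S_U$ are continuous, and to obtain continuity at $(e,e)$ by a block-wise argument reinforced by a squeeze estimate. First I would record that, since $e$ is the neutral element, $U(e,e)=e$, so the claim reduces to showing that $U(x,y)\lran e$ whenever $(x,y)\lran (e,e)$. As $e\in\opint{0,1}$, the point $(e,e)$ is interior, and I would split a small square neighbourhood of it into the four blocks $\cint{0,e}^2$, $\cint{e,1}^2$, $\cint{0,e}\times \cint{e,1}$ and $\cint{e,1}\times \cint{0,e}$, each having $(e,e)$ as a corner, and verify that the limit of $U$ along each block equals $e$.

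On the block $\cint{0,e}^2$ the uninorm coincides with the scaled t-norm $U(x,y)=e\cdot T_U(\frac{x}{e},\frac{y}{e})$; since $T_U$ is continuous and $T_U(1,1)=1$, letting $(x,y)\lran (e,e)$ gives $U(x,y)\lran e\cdot T_U(1,1)=e$. Symmetrically, on $\cint{e,1}^2$ the uninorm coincides with the scaled t-conorm $U(x,y)=e+(1-e)\cdot S_U(\frac{x-e}{1-e},\frac{y-e}{1-e})$, and continuity of $S_U$ together with $S_U(0,0)=0$ yields $U(x,y)\lran e$. On the two mixed blocks I would invoke the inequality $\min(x,y)\leq U(x,y)\leq \max(x,y)$, which holds on $\cint{0,e}\times \cint{e,1}\cup \cint{e,1}\times \cint{0,e}$; as $(x,y)\lran (e,e)$ both $\min(x,y)$ and $\max(x,y)$ tend to $e$, so the squeeze theorem forces $U(x,y)\lran e$ on these blocks as well.

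Since the four blocks jointly exhaust a neighbourhood of $(e,e)$ and every block-wise limit equals $e=U(e,e)$, continuity at $(e,e)$ follows. I do not expect a genuine obstacle here: the argument is essentially a case analysis, and the only point demanding slight care is to confirm that the four blocks overlap only along the lines $x=e$ and $y=e$ and together cover a full neighbourhood of the corner, so that no direction of approach is omitted. The continuity of $T_U$ and $S_U$, which is exactly what $U\in\mathcal{U}$ provides, and the boundary condition $\min\leq U\leq \max$ are precisely the ingredients that make each of the four limits collapse to the common value $e$.
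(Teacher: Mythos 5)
Your argument is correct and complete: the restrictions of $U$ to $\cint{0,e}^2$ and $\cint{e,1}^2$ are linear transformations of the continuous operations $T_U$ and $S_U$ taking the value $e$ at $(e,e)$, the squeeze $\min(x,y)\leq U(x,y)\leq \max(x,y)$ handles the two mixed blocks, and the four closed blocks cover a neighbourhood of $(e,e)$, so the pasting of the four block-wise limits yields continuity at $(e,e)$. Note that the paper itself does not prove this lemma --- it imports it from the reference [repord] --- so there is no in-paper proof to compare against; your self-contained argument is the natural one, and the only cosmetic remark is that for the degenerate neutral elements $e\in\{0,1\}$ (allowed in the broader definition of $\mathcal{U}$) the claim reduces immediately to the continuity of $T_U$ or $S_U$ alone.
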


Next we show that for $x,y\in \uint$ we have $U(x,y)=\min(x,y)$ or $U(x,y)=\max(x,y)$  if $x$ is an idempotent element of $U.$

\begin{lemma}
Let  $U\colon \uint^2\lran \uint$ be a uninorm and let $U\in \mathcal{U}.$ If $a\in \uint$ is an idempotent point of $U$ then
$U$ is internal on $\{a\}\times \uint,$ i.e., \textcolor{red}{$U(a,x)\in \{x,a\}$ for all $x\in \uint.$}
 \end{lemma}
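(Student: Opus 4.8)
The plan is to study the one-variable section $g(x)=U(a,x)$ and show it takes values only in $\{a,x\}$. First I would dispose of the trivial case $a=e$, where $g(x)=x$ because $e$ is the neutral element. It then suffices to treat $a<e$, since the case $a>e$ is settled by the order-dual argument indicated at the end. So assume $a<e$. The first observation is that associativity together with idempotency of $a$ forces the range of $g$ into its fixed-point set: since $U(a,U(a,x))=U(U(a,a),x)=U(a,x)$, every value $t=U(a,x)$ satisfies $U(a,t)=t$. Writing $I_a=\{t\in\uint:U(a,t)=t\}$, we thus have $\mathrm{Ran}(g)\subseteq I_a$.

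Next I would pin down $I_a$ on the pure square $\cint{0,e}^2$. Because $U\in\mathcal{U}$, the restriction of $U$ to $\cint{0,e}^2$ is a rescaling of the continuous t-norm $T_U$, and the idempotency $U(a,a)=a$ makes $a/e$ idempotent for $T_U$; since an idempotent point of a continuous t-norm acts as the minimum (a standard consequence of the ordinal-sum decomposition of continuous t-norms recalled in Section~\ref{sec2}, cf.\ \cite{KMP00}), we obtain $U(a,x)=\min(a,x)$ for all $x\le e$. This already yields internality on $\cint{0,e}$, and it records that $I_a\cap\cint{0,e}=\cint{0,a}$, that $I_a\cap\opint{a,e}\,\cup\{e\}$ meets $\opint{a,e]}$ nowhere, and that $e\notin I_a$.

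It remains to treat $x\in\cint{e,1}$; here $a<e\le x$ places $(a,x)$ in the mixed region, so $\min(a,x)\le U(a,x)\le\max(a,x)$ gives $a\le g(x)\le x$. Combined with $\mathrm{Ran}(g)\subseteq I_a$ and the emptiness of $I_a\cap\opint{a,e]}$, each value $g(x)$ is either $a$ or lies in $B:=I_a\cap\opint{e,1]}$ with $g(x)\le x$. Internality will follow once I show that whenever $g(x)\in B$ one in fact has $g(x)=x$; equivalently, that $B$ is an \emph{up-set} $\cint{\beta,1}$ (or $\opint{\beta,1]}$, or $\emptyset$) with no interior gaps. This is the crux. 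The key step is: for $w\in B$ and any $y\in\cint{e,1}$, associativity gives $U(a,U(w,y))=U(U(a,w),y)=U(w,y)$, so $U(w,y)=S^*(w,y)$ (the rescaled continuous t-conorm $S_U$ on $\cint{e,1}^2$) again lies in $I_a$, and being $\ge\max(w,y)>e$ it lies in $B$. Now continuity of $S_U$ makes $y\mapsto S^*(w,y)$ continuous on $\cint{e,1}$ with $S^*(w,e)=w$ and $S^*(w,1)=1$, so by the intermediate value theorem its range is exactly $\cint{w,1}$; hence $\cint{w,1}\subseteq B$, and $B$ is an up-set. The case analysis then closes the argument: for $x\in B$ we have $g(x)=x$, while for $x\in\cint{e,1}\setminus B$ every element of $B$ exceeds $x$, so $g(x)\in I_a\cap\cint{a,x}=\{a\}$ and $g(x)=a$.

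Finally, the case $a>e$ is the order-dual: idempotents of the continuous t-conorm $S_U$ act as the maximum, giving $U(a,x)=\max(a,x)$ for $x\ge e$, and for $x<e$ the same reasoning with the continuous t-norm $T_U$ shows the fixed-point set below $e$ is a \emph{down-set}, using that $y\mapsto T^*(w,y)$ is continuous on $\cint{0,e}$ with $T^*(w,e)=w$ and $T^*(w,0)=0$. The main obstacle is precisely this mixed-region step: a priori $g$ may be discontinuous on the mixed region (that is exactly where uninorms in $\mathcal{U}$ are allowed to jump), so internality there cannot be read off from continuity of $g$ itself; the surjectivity-via-IVT argument for $S^*(w,\cdot)$ is what converts the continuity of the \emph{underlying} operation into the absence of interior gaps in the fixed-point set.
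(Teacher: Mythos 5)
Your proof is correct and rests on exactly the same mechanism as the paper's: idempotency of $a$ in the continuous underlying t-norm gives $U(a,x)=\min(a,x)$ below $e$, associativity forces every value $U(a,y)$ into the fixed-point set of $u_a$, and continuity of the underlying t-conorm together with the intermediate value theorem shows that fixed-point set has no gaps above $e$. The paper phrases this as a direct contradiction with a single putative non-internal value $c=U(a,y)$ (choosing $y_1$ with $U(c,y_1)=y$), while you package the same step as an up-set property of the fixed-point set, but the two arguments are essentially identical.
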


\begin{proof}
If $a=e$ the result is obvious. Suppose $a<e$ (the case when $a>e$ is analogous). Since $T_U$ is continuous we have
$U(a,x)=\min(a,x)$ if $x\in\cint{0,e}.$ Suppose that there exists $y\in \rint{e,1}$ such that $U(a,y)=c\in \opint{a,y}.$
Then $U(a,c)=U(a,a,y)=U(a,y)=c$ and if $c\leq e$
then $c=U(a,c)\leq a$ what is a contradiction. Thus $y>c>e.$ Then since $S_U$ is continuous there exists a $y_1$ such that $U(c,y_1)=y.$
Then, however, $$U(a,y)=U(a,c,y_1)=U(c,y_1)=y$$ what is again a contradiction. Thus $U$ is internal on $\{a\}\times \uint.$
\end{proof}

 For a given uninorm $U\colon \uint^2\lran \uint$ and  each $x\in \uint$ we define a function $u_x\colon \uint\lran \uint$ by $u_x(z)=U(x,z)$ for $z\in \uint.$

\begin{lemma}
Let $U\colon \uint^2\lran \uint$ be a uninorm, $U\in \mathcal{U},$ and assume  $x\in \uint.$ The function $u_x$ is continuous if and only if one of the following conditions:
\begin{mylist}
\item  $u_x(1)<e,$
\item $u_x(0)>e,$
\item $e\in \mathrm{Ran}(u_x)$\end{mylist}
   is satisfied. \label{lemconux}
\end{lemma}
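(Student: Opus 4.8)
The plan is to recast continuity in monotone terms. Since $U$ is non-decreasing in each variable, $u_x$ is a non-decreasing self-map of $\uint$, and such a map is continuous if and only if its range $\mathrm{Ran}(u_x)=\cint{u_x(0),u_x(1)}$ is an interval (a monotone map on an interval is continuous exactly when it omits no intermediate value). This immediately settles the \emph{only if} direction: if $u_x$ is continuous then $\mathrm{Ran}(u_x)$ equals the interval $\cint{u_x(0),u_x(1)}$; if this interval contains $e$ then (iii) holds, and otherwise it lies entirely below $e$, giving (i), or entirely above $e$, giving (ii).

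For the converse I would treat the three cases separately, the cleanest being (iii). If $e\in\mathrm{Ran}(u_x)$, choose $z^{*}$ with $U(x,z^{*})=e$. Then for every $w\in\uint$ associativity and the neutral element give $U(x,U(z^{*},w))=U(U(x,z^{*}),w)=U(e,w)=w$, so every $w$ lies in $\mathrm{Ran}(u_x)$. Thus $u_x$ is onto $\uint$, its range is an interval, and $u_x$ is continuous.

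Cases (i) and (ii) are dual under the reflection $U^{d}(x,y)=1-U(1-x,1-y)$, which is again a uninorm in $\mathcal{U}$ with neutral element $1-e$ and interchanges (i) and (ii), so it suffices to treat (i). Assuming $u_x(1)=U(x,1)<e$, one first notes $x<e$ (otherwise $U(x,1)\ge U(e,1)=1$), so the whole range lies in $\cint{0,u_x(1)}\subseteq\lint{0,e}$, and $u_x$ is already continuous on $\cint{0,e}$ since there $u_x(z)=e\cdot T_U(\tfrac{x}{e},\tfrac{z}{e})$ with $T_U$ continuous. It remains to exclude a jump at some $z_0\in\rint{e,1}$. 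Suppose such a jump occurs and set $b=u_x(z_0)<e$. Using continuity of $S_U$ on $\cint{e,1}^2$ together with the associativity identity $U(b,t)=U(U(x,z_0),t)=u_x(U(z_0,t))$ for $t\in\cint{e,1}$, and letting $t\downarrow e$ so that $U(z_0,t)\downarrow z_0$, the jump of $u_x$ at $z_0$ is transported into a jump of the column $u_b$ at the single point $e$.

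Hence the whole difficulty in (i)/(ii) funnels into one statement: every column $u_b$ with $b<e$ and $U(b,1)<e$ is \emph{right-continuous at} $e$ (dually, left-continuous at $e$ for (ii)); once this is known, the transport argument shows that no jump can occur on $\rint{e,1}$, so $\mathrm{Ran}(u_x)$ is an interval and $u_x$ is continuous. I expect this right-continuity at $e$ to be the main obstacle, because neither the continuity of $T_U$ (which controls $u_b$ only on $\cint{0,e}$) nor that of $S_U$ (which governs the conorm block but not the off-diagonal values $U(b,t)$ with $b<e\le t$) applies directly there. To close it I would combine associativity with both underlying continuities and the bound $\min(b,t)\le U(b,t)\le\max(b,t)$ on $\cint{0,e}\times\cint{e,1}$, together with continuity of $U$ at $(e,e)$ from the first lemma above, driving any putative right-jump of $U(b,\cdot)$ at $e$ into a contradiction with the continuous t-conorm structure just above $e$.
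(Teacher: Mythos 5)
Your reduction of continuity to the range condition (a non-decreasing $u_x$ is continuous iff $\mathrm{Ran}(u_x)$ is the interval $\cint{u_x(0),u_x(1)}$), your ``only if'' direction, your case (iii), and the duality reducing (ii) to (i) all match the paper and are correct. The gap is in case (i), which you never actually close: you funnel everything into the claim that every column $u_b$ with $b<e$ and $U(b,1)<e$ is right-continuous at $e$, identify it yourself as ``the main obstacle,'' and then only list ingredients you \emph{would} combine to prove it. That claim is precisely the hard content of the lemma, so the argument is incomplete as written. Moreover, the transport step feeding into it is itself leaky: from a jump of $u_x$ at $z_0\in\rint{e,1}$ you set $b=u_x(z_0)$ and let $t\downarrow e$ in $u_b(t)=u_x(U(z_0,t))$, but since $U(z_0,t)\geq z_0$ this only probes $u_x$ at or above $z_0$, so a left jump at $z_0$ is never seen; and if $U(z_0,t)=z_0$ on a right neighbourhood of $e$ (e.g.\ when $z_0$ corresponds to an idempotent of $S_U$), the putative discontinuity of $u_b$ lands at some $t_0>e$ rather than at $e$, outside the scope of your funnel statement.

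The paper sidesteps the entire jump analysis by showing directly that the range is all of $\cint{0,v}$ with $v=u_x(1)$: for any $w\in\cint{0,v}$, continuity of $T_U$ together with $U(v,0)=0$ and $U(v,e)=v$ gives a $q\in\cint{0,e}$ with $U(v,q)=w$, and associativity then yields $w=U(U(x,1),q)=u_x(U(1,q))\in\mathrm{Ran}(u_x)$. This is the exact analogue of your own case (iii) argument, with the intermediate value property of the continuous $T_U$ playing the role that the neutral element played there; restructuring case (i) around it removes the obstacle entirely.
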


\begin{proof}
If $e\in \mathrm{Ran}(u_x)$ then there exists a $y\in \uint$ such that $U(x,y)=e.$ Since $U$ is monotone continuity of $u_x$ is equivalent with the equality $ \mathrm{Ran}(u_x) =\cint{a,b}$ for some $a=U(0,x)$ and
$b=U(1,x).$  Assume $c\in \uint.$
Then $U(x,y,c)=c$ and for $z=U(y,c)$ we have $u_x(z)=c,$ i.e.,   $\mathrm{Ran}(u_x)=$ $\uint.$  If $u_x(1)=v<e$ (the case when $u_x(0)>e$ can be shown similarly)
then due to the monotonicity the continuity of $u_x$ is equivalent with the equality $\mathrm{Ran}(u_x)=\cint{0,v}.$ Assume $w\in \cint{0,v}.$  Since $T_U$ is continuous there exists a $q\in \cint{0,e}$ such that $U(v,q)=w,$ i.e., $U(x,1,q)=w$
 and then $u_x(U(1,q))=w.$ Therefore  $\mathrm{Ran}(u_x)=\cint{0,v}.$

 Vice-versa, if $u_x$ is continuous and   $u_x(0)\leq e \leq u_x(1)$ then evidently $e\in \mathrm{Ran}(u_x).$

\end{proof}

\begin{example}
For a representable uninorm $U$ the function $u_x$ is continuous for all $x\in \opint{0,1}.$ If $U$ is conjunctive (disjunctive) then
$u_0$ ($u_1$) is continuous  and $u_1$ ($u_0$) is non-continuous in $0$ ($1$). For a uninorm $U\in \mathcal{U}_{\max}$ ($U\in \mathcal{U}_{\min}$)
 $u_x$ is continuous for all $x\in \cint{e,1}$ ($x\in \cint{0,e}$) and  $u_x$ is non-continuous in $e$ for all $x\in \lint{0,e}$ ($x\in \rint{e,1}$).
\end{example}

Now we recall a result   \cite[Proposition 1]{KD69} which shows a connection between continuity on cuts and joint continuity of a monotone function.

\begin{proposition}
Let $f(x,y)$ be a real valued function defined on an open set $G$ in the plane. Suppose that $f(x,y)$ is
continuous in $x$ and $y$ separately and is monotone in $x$ for each $y.$ Then $f(x,y)$ is (jointly) continuous on the set $G.$ \label{KDpro}
\end{proposition}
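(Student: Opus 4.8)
The plan is to prove continuity at each point separately: I fix an arbitrary $(x_0,y_0)\in G$ and show joint continuity there. Without loss of generality I assume $f$ is non-decreasing in $x$ for each $y$ (otherwise I replace $f$ by $-f,$ which preserves every hypothesis and the conclusion). Since $G$ is open I may pick $a<x_0<b$ close enough to $x_0$ that a whole product neighbourhood around $(x_0,y_0)$ lies in $G,$ so the two slices $x=a$ and $x=b$ stay inside $G$ for $y$ near $y_0.$ The entire argument rests on one sandwich idea: monotonicity in $x$ reduces the control of $f(x,y)$ for all nearby $(x,y)$ to the control of just the two fixed slices $x=a$ and $x=b.$

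Fix $\varepsilon>0.$ Using continuity of $z\mapsto f(z,y_0)$ at $x_0,$ I would shrink $a$ and $b$ so that $f(x_0,y_0)-\varepsilon<f(a,y_0)$ and $f(b,y_0)<f(x_0,y_0)+\varepsilon;$ monotonicity then yields the chain $f(x_0,y_0)-\varepsilon<f(a,y_0)\le f(x_0,y_0)\le f(b,y_0)<f(x_0,y_0)+\varepsilon.$ Next, since $y\mapsto f(a,y)$ and $y\mapsto f(b,y)$ are continuous at $y_0,$ there is $\delta_2>0$ such that $|y-y_0|<\delta_2$ forces both $|f(a,y)-f(a,y_0)|<\varepsilon$ and $|f(b,y)-f(b,y_0)|<\varepsilon;$ combining with the previous chain gives $f(x_0,y_0)-2\varepsilon<f(a,y)$ and $f(b,y)<f(x_0,y_0)+2\varepsilon$ for all such $y.$

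To assemble the squeeze I set $\delta=\min(x_0-a,\,b-x_0,\,\delta_2).$ For any $(x,y)$ with $|x-x_0|<\delta$ and $|y-y_0|<\delta$ we have $a<x<b,$ so monotonicity in $x$ gives $f(a,y)\le f(x,y)\le f(b,y),$ and the two outer bounds just obtained sandwich this into $|f(x,y)-f(x_0,y_0)|<2\varepsilon.$ As $\varepsilon$ and $(x_0,y_0)$ were arbitrary, $f$ is continuous throughout $G.$ The only genuine content is this sandwich step: separate continuity by itself is famously insufficient for joint continuity, and it is exactly the monotonicity in $x$ that rescues the conclusion, by letting the two slices $x=a$ and $x=b$ bracket every intermediate value while continuity in $y$ controls those slices. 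Everything else --- the reduction from openness of $G$ and the sign normalisation --- is routine $\varepsilon$--$\delta$ bookkeeping.
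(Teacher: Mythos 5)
The paper does not prove this proposition at all: it is quoted verbatim as Proposition~1 of Kruse and Deely \cite{KD69}, so there is no in-paper argument to compare against. Your sandwich proof is correct and is essentially the classical Kruse--Deely argument: bracket $f(x,y)$ between the two slices $x=a$ and $x=b$, control those slices by continuity in $y$, and control their values at $y_0$ by continuity in $x$. The only point worth tightening is the normalisation ``without loss of generality $f$ is non-decreasing in $x$'': as stated, the hypothesis only says $x\mapsto f(x,y)$ is monotone for each fixed $y$, so the direction could in principle depend on $y$, in which case neither $f$ nor $-f$ is globally non-decreasing in $x$. This costs you nothing, since the sandwich survives verbatim in the form $\min\bigl(f(a,y),f(b,y)\bigr)\le f(x,y)\le\max\bigl(f(a,y),f(b,y)\bigr)$, and both endpoints are still within $2\varepsilon$ of $f(x_0,y_0)$; in the paper's application the direction is uniform anyway, as $U$ is non-decreasing in both variables.
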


The following result shows that if $U(a,b)=e$ then $U$ is continuous in  \textcolor{red}{the point} $(a,b).$
\textcolor{red}{First, however, we introduce two useful lemmas.}

\textcolor{red}{\begin{lemma} 
Let $U\colon \uint^2\lran \uint$ be a uninorm with the neutral element $e\in \uint.$ Then if $U(a,b)=e,$ for some $a,b\in \uint,$ there is either
$a=b=e,$ or $a$ and $b$ are not idempotent elements of $U.$ \label{noid}
\end{lemma}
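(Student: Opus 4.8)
The plan is to establish a slightly stronger statement from which the lemma follows immediately, namely: whenever $U(a,b)=e$ and at least one of $a,b$ is an idempotent point of $U$, then in fact $a=b=e$. Granting this, the dichotomy in the lemma is automatic: if $a=b=e$ we are in the first alternative, and otherwise the stronger statement forbids either argument from being idempotent, giving the second alternative. By commutativity of $U$ it suffices to treat the case in which $a$ is idempotent, i.e. $U(a,a)=a$, since the roles of $a$ and $b$ are symmetric.

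The key idea is to evaluate the ternary product $U(a,a,b)$ in two ways, which is legitimate because the $n$-ary form of a uninorm is well defined by associativity. Grouping the first two arguments and using idempotency of $a$ gives $U(U(a,a),b)=U(a,b)=e$. Grouping the last two arguments and using the hypothesis $U(a,b)=e$ together with the neutrality of $e$ gives $U(a,U(a,b))=U(a,e)=a$. Since both expressions equal $U(a,a,b)$, we conclude $a=e$.

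Once $a=e$ has been forced, the value $b=e$ drops out at once: from $U(a,b)=e$ and $a=e$ we obtain $b=U(e,b)=U(a,b)=e$, so indeed $a=b=e$, as required by the stronger claim.

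I do not anticipate any real obstacle in this argument, as it is purely algebraic and relies only on associativity, commutativity, and the defining property of the neutral element; in particular it uses neither continuity of the underlying operations nor the membership $U\in\mathcal{U}$, so the lemma holds for arbitrary uninorms. The single point requiring care is to invoke commutativity explicitly when reducing to the case that $a$ (rather than $b$) carries the idempotency, so that the symmetric treatment of the two arguments is justified.
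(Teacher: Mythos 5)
Your argument is correct and is essentially identical to the paper's own proof: both evaluate $U(a,a,b)$ in two ways (via idempotency of $a$ to get $e$, and via $U(a,b)=e$ with neutrality to get $a$), conclude $a=e$, and then deduce $b=e$ from $U(e,b)=b$. No issues.
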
}

\textcolor{red}{\begin{proof}  If $a$ is an idempotent point (similarly for $b$)
then $$e=U(a,b)=U(a,U(a,b))=U(a,e)=a,$$ and $$e=U(a,b)=U(e,b)=b,$$ i.e., $a=b=e.$ \end{proof}}

\textcolor{red}{\begin{lemma}
Let $U\colon \uint^2\lran \uint$ be a uninorm with the neutral element $e\in \uint.$ Then if $U(a,b)=e,$ for some $a,b\in \uint,$ there is either
$a=b=e,$ or $a<e,$  $b>e,$ or $a>e,$ $b<e.$ \label{eeq}
\end{lemma}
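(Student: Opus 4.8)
The plan is to settle the statement by a short case analysis on the positions of $a$ and $b$ relative to the neutral element $e,$ using nothing beyond monotonicity and the neutral-element identity $U(x,e)=x$ (equivalently, the facts recalled in Section \ref{sec2} that $U$ restricts to a t-norm below $e$ and to a t-conorm above $e$). The three target alternatives correspond exactly to the three surviving sign patterns for $(a,b),$ so the task reduces to ruling out the remaining ones.

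First I would dispose of the cases in which one argument already equals $e.$ If $a=e,$ the neutral-element property gives $U(a,b)=U(e,b)=b,$ so $U(a,b)=e$ forces $b=e$; the case $b=e$ is symmetric. Thus $a=e$ or $b=e$ leads to the first alternative $a=b=e.$ Next, assuming $a\neq e$ and $b\neq e,$ I would exclude the two same-side configurations by monotonicity. If $a<e$ and $b<e,$ then $b\le e$ gives $U(a,b)\le U(a,e)=a<e,$ contradicting $U(a,b)=e$; this is precisely the bound $U\le\min$ on $\cint{0,e}^2.$ Symmetrically, if $a>e$ and $b>e,$ then $U(a,b)\ge U(a,e)=a>e,$ again impossible. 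The only sign patterns left are $a<e,\,b>e$ and $a>e,\,b<e,$ which are the two remaining alternatives.

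I do not expect a genuine obstacle: the conclusion rests entirely on the boundary behaviour of $U,$ and the mixed-region estimate $\min\le U\le\max$ is not even needed. The single point requiring care is to keep the inequalities strict---$a<e$ or $b>e$---so that they actually contradict the equality $U(a,b)=e$ rather than merely bounding the value by $e.$ I also note that this lemma sharpens the preceding Lemma \ref{noid}: that result excludes nontrivial idempotent arguments, whereas here the sign dichotomy is obtained directly, so the present proof need not invoke it.
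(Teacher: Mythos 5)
Your proof is correct and uses the same ingredients as the paper's: the neutral-element identity plus monotonicity in one coordinate. The paper argues directly (from $a<e$ it derives $e=U(a,b)\leq U(e,b)=b$ and then uses $b\neq e$ for strictness), whereas you rule out the two same-side configurations via $U(a,b)\leq U(a,e)=a<e$; these are symmetric variants of one another, so this is essentially the same argument.
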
}

\textcolor{red}{\begin{proof}  If $a=e$ then evidently also $b=e.$ If $a<e$ then $b\neq e$ and we have 
$$e=U(a,b)\leq U(e,b)=b,$$ i.e., $e<b.$ Finally, if $a>e$ then $b\neq e$ and we have
$$e=U(a,b)\geq U(e,b)=b,$$ i.e., $e>b.$ 
 \end{proof}}

\begin{proposition}
Let $U\colon \uint^2\lran \uint$ be a uninorm, $U\in \mathcal{U}.$ If $U(a,b)=e$ for some $a,b\in \uint,$ $a<e,$ then
$U$ is continuous on $\uint^2\setminus (\lint{0,a}\cup\rint{b,1})^2.$ \label{procon}
\end{proposition}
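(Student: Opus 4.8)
The plan is to reduce joint continuity of $U$ to continuity of its one-variable sections and then glue these together with Proposition \ref{KDpro}. Write $u_x(y)=U(x,y)$ and, dually, $u^y(x)=U(x,y)$; by commutativity $u^y=u_y$. By Lemma \ref{eeq} the hypothesis $U(a,b)=e$ with $a<e$ forces $b>e$, and since $e=U(a,b)=U(b,a)$ lies in $\mathrm{Ran}(u_a)$ and in $\mathrm{Ran}(u_b)$, Lemma \ref{lemconux} already gives that $u_a$ and $u_b$ are continuous. Note that the set on which continuity must be proved is exactly the ``cross'' $C=(\cint{a,b}\times\uint)\cup(\uint\times\cint{a,b})$, i.e. the points of $\uint^2\setminus(\lint{0,a}\cup\rint{b,1})^2$ having at least one coordinate in $\cint{a,b}$.

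The crux is to show that every section $u_x$ with $x\in\cint{a,b}$ is continuous; the statement for $u^y$, $y\in\cint{a,b}$, then follows by commutativity. For $x\in\cint{a,b}$ monotonicity together with $U(a,b)=e$ gives $u_x(0)\le U(x,a)\le U(b,a)=e$ and $u_x(1)\ge U(x,b)\ge U(a,b)=e$, so by Lemma \ref{lemconux} it suffices to prove $e\in\mathrm{Ran}(u_x)$. For $x\in\opint{a,e}$ I would use the continuity of $u_a$ just established: since $u_a(e)=a<x<e=u_a(b)$, the intermediate value theorem yields $y_x\in\opint{e,b}$ with $U(a,y_x)=x$, and since $S_U$ is continuous on $\cint{e,1}^2$ there is $y$ with $U(y_x,y)=b$; associativity then gives $U(x,y)=U\bigl(U(a,y_x),y\bigr)=U\bigl(a,U(y_x,y)\bigr)=U(a,b)=e$, so $e\in\mathrm{Ran}(u_x)$. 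The case $x\in\opint{e,b}$ is symmetric, using $u_b$, the continuity of $T_U$ on $\cint{0,e}^2$, and the point $(b,a)$; the endpoints $x\in\{a,e,b\}$ are immediate. Thus all sections through $\cint{a,b}$ are continuous.

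The hardest point will be joint continuity on the off-diagonal part of the arms, e.g. at $(x_0,y_0)$ with $x_0\in\opint{a,e}$ and $y_0>b$: here the transversal section $u^{y_0}$ is not among those already handled, and the naive limit argument fails in one direction because the underlying t-norm compresses values. The plan is to route a small increment of the first variable through a continuous section by associativity. Approximating such an increment by $U(x_0,\tau)=t$ with $\tau\to e$ gives $U(t,y_0)=U\bigl(U(x_0,\tau),y_0\bigr)=U\bigl(x_0,U(\tau,y_0)\bigr)$, where the inner term is governed by the continuous $S_U$ on $\cint{e,1}^2$ and the outer one by the now-continuous $u_{x_0}$, which controls the one-sided limit on the side $\tau>e$. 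For the side $\tau<e$ one factors $y_0=U(b,w)$ (continuity of $S_U$) so that $U(\tau,y_0)=U\bigl(U(\tau,b),w\bigr)$; continuity of $u_b$ at $e$ gives $U(\tau,b)\to b$, whence $U(\tau,y_0)\to y_0$ inside $\cint{e,1}^2$. Together these show $u^{y_0}$ is continuous at $x_0$, so both partial sections are continuous at $(x_0,y_0)$. The opposite arms and the neighbourhood of the line $\{e\}$ are handled symmetrically, with $S_U$ replaced by $T_U$ on $\cint{0,e}^2$ and $u_b$ by $u_a$.

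Finally I would assemble joint continuity. On each open box contained in $C$ on which both families of sections are continuous — in particular on $\opint{a,b}^2$, and on the off-diagonal arms after the previous step, as well as on the diagonal blocks $\opint{0,e}^2$ and $\opint{e,1}^2$ where $T_U$, $S_U$ are continuous — Proposition \ref{KDpro} upgrades separate continuity plus monotonicity in each variable to joint continuity. The boundary lines $x\in\{a,b\}$ or $y\in\{a,b\}$ of $C$ are then caught by the monotone criterion: a function non-decreasing in both variables is continuous at a point once its lower-left and upper-right iterated limits agree with the value there, and these limits are supplied by the one-variable continuities established above. The main obstacle throughout is the transversal (off-diagonal) direction in the second and third steps; everything else is a direct application of Lemma \ref{lemconux} and Proposition \ref{KDpro} or a symmetric repetition.
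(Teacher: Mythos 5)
Your first two paragraphs match the paper's core argument: $b>e$ by Lemma \ref{eeq}, $u_a$ and $u_b$ are continuous by Lemma \ref{lemconux}, and for $x\in\cint{a,b}$ you reduce continuity of $u_x$ to $e\in\mathrm{Ran}(u_x)$ and manufacture a partner for $x$ by associativity (the paper routes through an element $a^f$ with $U(f,a^f)=a$ obtained from continuity of $T_U$ rather than through $u_a$; that difference is cosmetic). Where you diverge is the finish: the paper never computes transversal limits. It observes via Lemma \ref{noid} that $a$ and $b$ are not idempotent, so $p=U(a,a)<a$, $q=U(b,b)>b$ and $U(p,q)=U(U(a,b),U(a,b))=e$, whence the same argument yields continuity of $u_x$ on the strictly larger interval $\cint{p,q}$, and only then invokes Proposition \ref{KDpro}. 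You keep the band at $\cint{a,b}$ and instead attack the transversal sections directly by associativity.

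That attack has one genuine gap. Your entire third paragraph rests on the unproved assertion that a small increment of the first variable at $x_0$ can be realized as $t=U(x_0,\tau)$ with $\tau\to e$, i.e.\ that $u_{x_0}$ is not locally constant at $e$. This is exactly where the non-idempotency of the points of $\cint{a,b}$ is needed, and you never invoke Lemma \ref{noid}: if $U(x_0,\sigma_0)=x_0$ for some $\sigma_0<e$, iterating and using the continuity of $u_{x_0}$ produces an idempotent $c^*\le\sigma_0$ with $U(x_0,c^*)=x_0$, and one must rule out both $c^*\in\cint{a,e}$ (such a $c^*$ would have a partner by your second step, contradicting Lemma \ref{noid}) and $c^*<a$ (for $x_0\le e$ the continuity of $T_U$ forces $U(x_0,c^*)=\min(x_0,c^*)=c^*<x_0$; for $x_0>e$ one gets $x_0=U(x_0,c^*)\le U(b,a)=e$, a contradiction). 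Without this, the solutions $\sigma_x$ of $U(x_0,\sigma_x)=x$ for $x\uparrow x_0$ may stay bounded away from $e$, and then in your left-hand routing $U(\sigma_x,y_0)=U(U(\sigma_x,b),w)$ the inner value $U(\sigma_x,b)$ can be $<e$, so the pair $(U(\sigma_x,b),w)$ falls back into the mixed region $\cint{0,e}\times\cint{e,1}$ where continuity is not yet known and the claimed limit $U(\sigma_x,y_0)\to y_0$ is unsupported. Supply this non-constancy argument (or adopt the paper's enlargement to $\cint{p,q}$, which in addition gives the open neighbourhood needed to apply Proposition \ref{KDpro} across the boundary lines $x=a$ and $x=b$) and the rest of your plan goes through.
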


\begin{proof} \textcolor{red}{Since $a<e$ Lemma \ref{noid} implies that $a$ and $b$ are not idempotent elements of $U$ 
 and Lemma \ref{eeq} implies that $b>e.$}
 From Lemma \ref{lemconux} we know that $u_a$ and $u_b$ are continuous functions.
Next we will show that for all $f\in \opint{a,b}$ there exists a $v^f\in \uint$ such that $U(f,v^f)=e.$
Assume $f\in \rint{a,e}$ (for $f\in \lint{e,b}$ the proof is analogous).
 Since $T_U$ is continuous and $U(a,f)\leq a,$ $U(f,e)=f$ there exists an $a^f\in \cint{0,e}$ such that
 $U(f,a^f)=a.$ Then $$e=U(a,b)=U(f,a^f,b)$$ and if $v^f=U(a^f,b)$ then $U(f,v^f)=e.$ Summarising, we get that for all $x\in \cint{a,b}$ the function $u_x$ is continuous. Now since $a$ and $b$ are not idempotents we have $U(a,a)=p<a,$ $U(b,b)=q>b$ and $U(a,a,b,b)=e.$ Thus also all $u_x$ for $x\in \cint{p,q}$ are continuous and then Proposition \ref{KDpro} implies the result.
\end{proof}

\textcolor{red}{
\begin{remark}
If  $U\in \mathcal{U}$ then either $U(x,y)=e$ implies $x=y=e,$ or there exists an $x\neq e$ such that $U(x,y)=e$ for some $y\in \uint.$ We will focus on the second case. Then Lemma \ref{eeq} implies that either 
 $x<e,$ $y>e,$ or $x>e,$  $y<e.$ We will suppose that $x<e$ and $y>e$ (as the other case is analogous). Then associativity implies $U(\underbrace{x,\ldots,x}_{n\text{-times}},\underbrace{y,\ldots,y}_{n\text{-times}})=e$ for all
$n\in \mathbb{N}$ and similarly as in the proof of Proposition \ref{procon} we can show that for all $z\in \cint{U(\underbrace{x,\ldots,x}_{n\text{-times}}),U(\underbrace{y,\ldots,y}_{n\text{-times}})}$ there exists a $q\in \uint$
such that $U(z,q)=e.$ Further, if $U(b,c)=e$ for some $b,c\in \uint,$ $b\neq e,$ then by Lemma \ref{noid} the points $b$ and $c$ are not idempotents. Therefore, in this case,
$U(x,y)=e$ for some $y\in \uint$ if and only if $x\in \opint{a,d},$ where $$a=\lim\limits_{n\lran \infty} U(\underbrace{x,\ldots,x}_{n\text{-times}})$$
and  $$d=\lim\limits_{n\lran \infty} U(\underbrace{y,\ldots,y}_{n\text{-times}}).$$ Note that $a$ and $d$ are idempotent elements of $U$ which follows from the continuity of the underlying functions of $U.$ Further, the monotonicity of $U$ implies that $a<e<d.$ The commutativity of $U$ then implies that
if $U(x,y)=e$ for some $x,y\in \uint$ then $x,y \in \opint{a,d}.$ Vice versa, for all $x \in \opint{a,d}$ there exists a $y \in \opint{a,d}$ such that
$U(x,y)=e.$ Due to Proposition \ref{procon} we see that $U$ is continuous on $\{x\}\times \uint$ for all $x \in \opint{a,d}.$ If we take the union over all
$x\in \opint{a,d}$ then the commutativity of $U$ and Proposition \ref{KDpro} implies that $U$ is continuous on $\opint{a,d}\times \uint \cup \uint \times \opint{a,d}.$
In order to include also the case when $U(x,y)=e$ implies $x=y=e,$ we can generally say that for an $x\in \uint$ there exists a $y\in \uint$ such that $U(x,y)=e$ if and only if $x\in \opint{a,d}\cup \{e\}.$ Note that in the case when $U(x,y)=e$ implies $x=y=e,$ we take $a=e=d.$
\label{remad}
\end{remark}
}

\begin{example}
Assume two representable uninorms $U_1,U_2\colon \uint^2\lran \uint$ with respective neutral elements $e_1,$ $e_2.$
Let $U_1^*$ be a  transformation of $U_1$ to $(\lint{0,\frac{1}{3}}\cup \{v\} \cup \rint{\frac{2}{3},1})^2$ \textcolor{red}{given by \eqref{unitr},} where $v=\frac{1}{3}$ ($v=\frac{2}{3}$) if $U_2$ is conjunctive (disjunctive), and let $U_2^*$ be a linear transformation of $U_2$ to $\cint{\frac{1}{3},\frac{2}{3}}^2.$
Then the ordinal sum of semigroups $G_{\alpha} = (\lint{0,\frac{1}{3}}\cup \{v\} \cup \rint{\frac{2}{3},1},U_1^*),$
$G_{\beta}= (\cint{\frac{1}{3},\frac{2}{3}},U_2^*),$ with $\alpha < \beta,$
is a semigroup $(\uint,U),$ where $U$ is a uninorm with the neutral element $e=\frac{e_2+1}{3}.$
We can find the structure of $U$ on  Figure \ref{figst}. All points of discontinuity of $U$ except $(0,1),(0,1)$ correspond to the transformation of the points $(x,y)\in \uint^2$ such that $U_1(x,y)=e_1.$ For simplicity, we will assume that $U_1(x,1-x)=e_1=\frac{1}{2}$ for all $x\in \opint{0,1}.$ Moreover, for every $a\in \opint{\frac{1}{3},\frac{2}{3}}$ there exists a $b\in \opint{\frac{1}{3},\frac{2}{3}}$ such that $U(a,b)=e.$ The previous result then implies that $U$ is continuous in every point from $\opint{\frac{1}{3},\frac{2}{3}}\times \uint$ and from $\uint\times \opint{\frac{1}{3},\frac{2}{3}}.$
\label{examploa}
\end{example}

\begin{figure}[htb] \begin{center}
\begin{picture}(150,150)
\put(0,0){ \line(1,0){150} }
\put(0,0){ \line(0,1){150} }
\put(150,150){ \line(-1,0){150} }
\put(150,150){ \line(0,-1){150} }

\put(0,50){ \line(1,0){150} }
\put(50,0){ \line(0,1){150} }
\put(0,100){ \line(1,0){150} }
\put(100,0){ \line(0,1){150} }

\put(0,150){ \line(1,-1){50} }
\put(100,50){ \line(1,-1){50} }

\put(70,70){$U_2^*$}
\put(20,20){$U_1^*$}
\put(120,120){$U_1^*$}
\put(70,120){$\max$}
\put(120,70){$\max$}
\put(70,20){$\min$}
\put(20,70){$\min$}
\put(15,110){$U_1^*$}
\put(30,130){$U_1^*$}
\put(115,10){$U_1^*$}
\put(130,30){$U_1^*$}
\end{picture} \end{center}
\caption{The uninorm $U$ from Example \ref{examploa}. The oblique lines denote the points of discontinuity of $U.$} \label{figst}
\end{figure}

In the following results we will continue to investigate properties of the function $u_x.$

\begin{proposition}
Let $U\colon \uint^2\lran \uint$ be a uninorm, $U\in \mathcal{U}.$ Then for each $x\in \uint$ there is at most one point of discontinuity of $u_x.$
Further, if $u_x$ is non-continuous in $y\in \uint$ then $U(x,z)<e$ for all $z<y$ and $U(x,z)>e$ for all $z>y.$ \label{atmost}
\end{proposition}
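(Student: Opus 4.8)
The plan is to first dispose of the continuous case by Lemma~\ref{lemconux}, then to show that in the non-continuous case exactly one jump is possible and that it must cross the level $e$. So assume $u_x$ is not continuous; by Lemma~\ref{lemconux} this means $u_x(0)\le e\le u_x(1)$ and $e\notin\Ran(u_x)$. Since $U$ is non-decreasing in its second argument, $u_x$ is non-decreasing, and because it omits the value $e$ the sets $L=\{z\in\uint: u_x(z)<e\}$ and $H=\{z\in\uint: u_x(z)>e\}$ are complementary, with $L$ an initial segment containing $0$ and $H$ a final segment containing $1$. Putting $y=\sup L$ then gives $u_x(z)<e$ for $z<y$ and $u_x(z)>e$ for $z>y$, which is precisely the ``Further'' assertion; in particular $u_x$ jumps over $e$ at $y$, since $u_x(y^-)\le e\le u_x(y^+)$ while $u_x(y)\neq e$.

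For uniqueness I would reduce the claim to showing that \emph{every} jump of $u_x$ straddles $e$, i.e. $u_x(z_0^-)\le e\le u_x(z_0^+)$ at each jump point $z_0$. Granting this, two distinct jumps at $z_1<z_2$ would force $e\le u_x(z_1^+)\le u_x(z)\le u_x(z_2^-)\le e$ for every $z\in\opint{z_1,z_2}$, hence $u_x\equiv e$ on that interval, contradicting $e\notin\Ran(u_x)$. Thus at most one jump can survive, and by the first paragraph it must be the one at $y$. Equivalently, it suffices to prove that $u_x$ is continuous at every point where its value is strictly below $e$ and at every point where its value is strictly above $e$.

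This local continuity statement is the heart of the matter, and the engine I would use is associativity combined with the continuity of the underlying $T_U$ and $S_U$. To prove continuity at a point $z_0$ with $u_x(z_0)<e$ and $z_0<e$, I would fix a reference point $z_2$ with $z_0<z_2<e$ and $u_x(z_2)<e$ (available from the description of $L$), write each nearby argument as $z=U(z_2,s)$ with $s\in\cint{0,e}$, and use $u_x(z)=U(x,U(z_2,s))=U(u_x(z_2),s)$. Since $(u_x(z_2),s)\in\cint{0,e}^2$, this is a rescaled value of the continuous t-norm $T_U$; as $s$ sweeps $\cint{0,e}$ the values $U(z_2,s)$ cover a full neighbourhood of $z_0$, so a compactness argument in $s$ transfers continuity to $u_x$ at $z_0$. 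The dual construction through $S_U$, with a reference point $z_2$ satisfying $e<z_2<z_0$ and $u_x(z_2)>e$, handles points $z_0>e$ whose value exceeds $e$.

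The step I expect to be the main obstacle is the \emph{mixed} configuration, where the routing above cannot be confined to $\cint{0,e}^2$ or $\cint{e,1}^2$: for example $x<e$, $z_0>e$, yet $u_x(z_0)<e$ (and symmetrically $x>e$, $z_0<e$, $u_x(z_0)>e$, together with the boundary point $z_0=e$). This is exactly the region where $U$ need not be continuous, so neither underlying operation is directly available. To close it I would exploit the idempotent structure of Remark~\ref{remad}: since $u_x$ is discontinuous, $e\notin\Ran(u_x)$ forces $x\notin\opint{a,d}$, so $x$ lies at or beyond the idempotent $a<e$ (or $d>e$). Using the internality of $U$ on $\{a\}\times\uint$ proved above and the monotonicity bound $u_x\le u_a$ to trap $u_x$ between $x$ and the internal profile of $u_a$, and then invoking Proposition~\ref{KDpro} to upgrade separate continuity to the behaviour needed, I would rule out any off-$e$ jump in the mixed region. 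Making this confinement argument fully rigorous is where the real work lies.
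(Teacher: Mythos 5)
Your reduction (at most one jump follows once every jump of $u_x$ straddles $e$) and your treatment of the ``pure'' configurations are sound, but the proof is not complete: the mixed configuration that you yourself flag as ``where the real work lies'' is exactly the content of the proposition, and the tools you propose for it do not obviously close it. Proposition~\ref{KDpro} upgrades \emph{separate} continuity to joint continuity, so it cannot be used while the continuity of $u_x$ itself is what is in question; and the bound $u_x\le u_a$ coming from the internality of $U$ at the idempotent $a$ of Remark~\ref{remad} only traps $u_x(z)$ in $\cint{0,\max(a,z)}$, which does not exclude a jump of $u_x$ lying entirely below $e$ at some $z_0>e$. Moreover, in the case $a=e=d$ the idempotent structure of Remark~\ref{remad} gives no information at all, so the confinement argument has no starting point there.

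The paper closes this case by arguing about $\Ran(u_x)$ rather than about continuity at individual domain points, and this is the idea your routing is missing. Put $f=\sup\{U(x,y)\mid U(x,y)\le e\}$ and $g=\inf\{U(x,y)\mid U(x,y)\ge e\}$. Given $f_1<f$, choose $y_f$ with $f_1\le U(x,y_f)<e$; since $U(U(x,y_f),0)=0$, $U(U(x,y_f),e)=U(x,y_f)$ and $T_U$ is continuous, there is $f_3\in\cint{0,e}$ with $U(U(x,y_f),f_3)=f_1$, and associativity gives $f_1=u_x\bigl(U(y_f,f_3)\bigr)\in\Ran(u_x)$; dually every $g_1>g$ is attained. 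Hence $\uint\setminus\Ran(u_x)$ is a connected interval around $e$, and a monotone function whose range omits only one interval has at most one jump, which necessarily straddles $e$ --- giving both claims at once. The decisive point is that only the \emph{value} $U(x,y_f)$ needs to lie in $\cint{0,e}$ so that the outer product stays in $T_U$-territory; the location of $y_f$ in the domain is irrelevant, so the mixed case costs nothing. Your construction $u_x(U(z_2,s))=U(u_x(z_2),s)$ already contains this computation --- you only need to read it as a statement about which values are attained (the interval $\cint{0,u_x(z_2)}$ lies in $\Ran(u_x)$) instead of insisting that the points $U(z_2,s)$ cover a neighbourhood of $z_0$.
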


\textcolor{red}{
\begin{proof} If $u_x$ is non-continuous then Lemma \ref{lemconux} implies $e\notin \mathrm{Ran}(u_x),$ $u_x(0)<e$ and $u_x(1)>e.$
 We will denote $$f=\sup\{U(x,y)\mid y\in \uint,U(x,y)\leq e\}$$ and $$g=\inf\{U(x,y)\mid y\in \uint,U(x,y)\geq e\}.$$ Note that the
 inequality $u_x(0)<e$ ($u_x(1)>e$) implies that $f$ is the supremum ($g$ is the infimum) of a non-empty set.
 Fix arbitrary $f_1 < f.$ Then there exist an $s > 0$ and $y_f$
such that $f_1 \leq f - s \leq U(x, y_f) \leq f < e$ because $f$ is the supremum. Since
$U(U(x,y_f), 0) = 0,$ $U(U(x,y_f),e) = U(x,y_f)$ and $T_U$ is continuous, there exists
an $f_3$ such that $U(U(x,y_f),f_3) = f_1.$  Therefore $U(x,U(y_f,f_3))=f_1$ and
$f_1\in \mathrm{Ran}(u_x).$
\newline
 Similarly, for each $g_1>g$ there is  $g_1\in \mathrm{Ran}(u_x).$ Therefore $\uint\setminus \mathrm{Ran}(u_x)$ is a connected set. Since $u_x$  is monotone it has only one point of discontinuity. Also,  if $u_x$ is non-continuous in $y\in \uint$ then $U(x,z)<e$ for all $z<y$ and $U(x,z)>e$ for all $z>y.$
\end{proof}}

\begin{proposition}
Let $U\colon \uint^2\lran \uint$ be a uninorm, $U\in \mathcal{U}.$  Then for all $x\in \uint$ the function
$u_x$ is either left-continuous or right-continuous. \label{rl}
\end{proposition}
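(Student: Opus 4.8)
The plan is to reduce everything to the unique possible discontinuity of $u_x$ and then to show that at that point the value of $u_x$ coincides with one of its two one-sided limits. If $u_x$ is continuous there is nothing to prove, so by Proposition \ref{atmost} I may assume $u_x$ has exactly one discontinuity point $y\in\uint$, with $U(x,z)<e$ for $z<y$ and $U(x,z)>e$ for $z>y$; moreover Lemma \ref{lemconux} gives $e\notin \mathrm{Ran}(u_x)$, hence $v:=U(x,y)\neq e$. Writing $L=\lim_{z\to y^-}u_x(z)$ and $R=\lim_{z\to y^+}u_x(z)$ (these exist because $u_x$ is monotone) I have $L\leq v\leq R$. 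The claim I want to establish is the clean dichotomy: if $v<e$ then $L=v$ (so $u_x$ is left-continuous), and if $v>e$ then $R=v$ (so $u_x$ is right-continuous); since $u_x$ is continuous away from $y$, this yields global left- or right-continuity.

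For the case $v<e$ I would argue by contradiction, assuming $L<v$, and feed the jump value $v$ back into the continuous t-norm part. Fix any $v'\in\opint{L,v}$. Because $v<e$, the section $U(v,\cdot)$ restricted to $\cint{0,e}$ is a scaled copy of the continuous t-norm $T_U$ with $U(v,0)=0$ and $U(v,e)=v$, so it maps $\cint{0,e}$ continuously onto $\cint{0,v}$; hence there is $t\in\cint{0,e}$ with $U(v,t)=v'$. Associativity then gives
$$u_x\big(U(y,t)\big)=U\big(x,U(y,t)\big)=U\big(U(x,y),t\big)=U(v,t)=v'<v=u_x(y).$$
Since $u_x$ is non-decreasing, $z_0:=U(y,t)$ must satisfy $z_0<y$, so $v'=u_x(z_0)\leq \sup_{z<y}u_x(z)=L$, contradicting $v'>L$. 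Therefore $L=v$ and $u_x$ is left-continuous at $y$.

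The case $v>e$ is handled dually: now $U(v,\cdot)$ on $\cint{e,1}$ is a scaled copy of the continuous t-conorm $S_U$ with $U(v,e)=v$ and $U(v,1)=1$, hence surjective onto $\cint{v,1}$; choosing $v'\in\opint{v,R}$ and $t\in\cint{e,1}$ with $U(v,t)=v'$ gives, by the same associativity identity, $u_x(U(y,t))=v'>v=u_x(y)$, so $z_0=U(y,t)>y$ and $v'=u_x(z_0)\geq R$, contradicting $v'<R$. Thus $R=v$ and $u_x$ is right-continuous at $y$.

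The conceptual heart — and the step I expect to be the only real obstacle — is the idea of reinserting the jump value $v=U(x,y)$ into the continuous underlying operation: this produces an argument $z_0=U(y,t)$ whose $u_x$-value is a prescribed $v'$ strictly between $L$ and $v$ (resp.\ between $v$ and $R$), and then pure monotonicity forces $z_0$ onto the correct side of $y$, collapsing the putative gap. The remaining points are routine: the existence of $t$ uses only the continuity and the boundary values of the relevant t-norm/t-conorm section, and the boundary cases $y\in\{0,1\}$ are immediate since one-sided continuity at an endpoint of $\uint$ is automatic.
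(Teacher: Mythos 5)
Your proof is correct and rests on the same mechanism as the paper's: the paper's proof of Proposition \ref{rl} cites from the proof of Proposition \ref{atmost} that $\uint\setminus \mathrm{Ran}(u_x)$ is an interval with $u_x(p)$ as an endpoint, and that fact is established there by exactly your device of hitting a prescribed value $v'$ via continuity of $T_U$ (resp.\ $S_U$) and associativity, $U(x,U(y,t))=U(U(x,y),t)=v'$. You simply inline that range argument and convert it into a direct contradiction showing $L=v$ or $R=v$, which makes the proof more self-contained but not different in substance.
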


\begin{proof} Assume $x\in \uint.$
From Proposition \ref{atmost} we know that $u_x$ is non-continuous in at most one point, and thus
we will suppose that $u_x$ is non-continuous in the point $p\in \uint.$
  Further, from the proof of the same proposition we know that  $\uint\setminus \mathrm{Ran}(u_x)$ is a connected set, i.e., an interval $I,$ and $u_x(p)$ is an end point of the interval $I.$ Then it is evident that if $u_x(p) = \inf I$ then
$u_x$ is left-continuous and $u_x(p)<e,$  and if $u_x(p) = \sup I$ then $u_x$ is right-continuous and $u_x(p)>e.$
\end{proof}

\begin{remark}
From the proof of Proposition \ref{rl} we see that if $u_x(p)<e$ for some $p\in \uint$ then $u_x$ is left-continuous on $\cint{0,p}$
and   if $u_x(p)>e$  then $u_x$ is right-continuous on $\cint{p,1}.$ \label{remlrint}
\end{remark}

Next we will show that the points of discontinuity of $u_x$ are non-increasing with respect to $x\in \uint.$

\begin{proposition}
Let $U\colon \uint^2\lran \uint$ be a uninorm, $U\in \mathcal{U}.$ Suppose that for $x,x_1\in \uint,$ $x_1<x,$ the functions $u_{x}$ and $u_{x_1}$  are non-continuous in points $y$ and $y_1,$ respectively. Then  $y_1\geq y.$
\label{promon}
\end{proposition}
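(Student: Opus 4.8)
The plan is to argue by contradiction, playing the monotonicity of $U$ in its first argument against the sign information furnished by Proposition \ref{atmost}. The key point is that Proposition \ref{atmost} tells us not merely that $u_x$ and $u_{x_1}$ each possess a single discontinuity, but precisely how $U$ behaves on the two sides of it: if $u_x$ jumps at $y$, then $U(x,z)<e$ for every $z<y$ and $U(x,z)>e$ for every $z>y$; and likewise $U(x_1,z)<e$ for $z<y_1$ and $U(x_1,z)>e$ for $z>y_1$. This two-sided sign statement is the only ingredient I would need beyond the basic monotonicity of the uninorm.

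First I would assume, towards a contradiction, that $y_1<y$. Since $y_1<y$, the interval $\opint{y_1,y}$ is non-empty, so I may fix any $z\in \opint{y_1,y}$. Because $z>y_1$, the sign property applied to $u_{x_1}$ gives $U(x_1,z)>e$; because $z<y$, the same property applied to $u_x$ gives $U(x,z)<e$. On the other hand, $U$ is non-decreasing in its first argument and $x_1<x$, so $U(x_1,z)\leq U(x,z)$. Chaining these inequalities yields $e<U(x_1,z)\leq U(x,z)<e$, which is absurd. Hence $y_1\geq y$, as claimed.

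There is no serious obstacle here once Proposition \ref{atmost} is in hand; the only thing to watch is that the argument genuinely uses its two-sided sign conclusion rather than the mere existence of a unique discontinuity. The real content is the observation that a point $z$ lying strictly between the two discontinuities would have to be simultaneously mapped below $e$ by the larger slice $u_x$ and above $e$ by the smaller slice $u_{x_1}$, a configuration that monotonicity forbids.
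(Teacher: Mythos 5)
Your proof is correct and rests on exactly the same two ingredients as the paper's: the two-sided sign conclusion of Proposition \ref{atmost} and the monotonicity of $U$ in its first argument. The paper phrases it directly ($U(x_1,z)\leq U(x,z)<e$ for all $z<y$ forces $y_1\geq y$) while you phrase it as a contradiction at an intermediate point $z\in\opint{y_1,y}$, but this is the same argument.
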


\begin{proof}
From the proof of Proposition \ref{atmost} we see that if $u_x$ is non-continuous in $y$ then $U(x,z)<e$
 for all $z<y$ and $U(x,z)>e$
 for all $z>y.$ If $u_{x_1}$ is non-continuous in $y_1$ then the monotonicity implies $U(x_1,z)<e$ for $z<y$ and thus
 $y_1\geq y.$
 \end{proof}

 \begin{corollary}
Let $U\colon \uint^2\lran \uint$ be a uninorm, $U\in \mathcal{U}.$ If  $u_{x_1}$ is non-continuous in $y$ and $u_{x_2}$ is non-continuous in $y$ for some $x_1<x_2$ then
$u_x$ is non-continuous in $y$ for all $x\in \cint{x_1,x_2}.$
\end{corollary}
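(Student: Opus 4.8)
The plan is to fix an arbitrary $x\in\opint{x_1,x_2}$ and prove that $u_x$ is discontinuous exactly at $y$; the two endpoints are covered by hypothesis. Everything hinges on two facts from Proposition \ref{atmost}: for $i\in\{1,2\}$ we have $U(x_i,z)<e$ whenever $z<y$ and $U(x_i,z)>e$ whenever $z>y$, and moreover each $u_x$ has at most one discontinuity point.

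First I would exploit monotonicity: since $U(x_1,z)\le U(x,z)\le U(x_2,z)$ for all $z$, the bounds above give $U(x,z)<e$ for $z<y$ and $U(x,z)>e$ for $z>y$. Because discontinuity of $u_{x_1}$ and $u_{x_2}$ forces, via Lemma \ref{lemconux}, that $u_{x_1}(1)>e$ and $u_{x_2}(0)<e$, the same sandwich yields $u_x(1)>e$ and $u_x(0)<e$; hence conditions (i) and (ii) of Lemma \ref{lemconux} fail and $u_x$ is continuous if and only if $e\in\Ran(u_x)$. I would also record here that $e\notin\opint{x_1,x_2}$: the section $u_e$ is the identity, hence continuous, and were $e$ interior the sandwich applied to $u_e(z)=z$ would force $y=e$, whereupon Proposition \ref{atmost} gives $U(x_1,x_2)>e$ and $U(x_2,x_1)<e$, contradicting commutativity. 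Thus $x\neq e$.

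The heart of the argument — and the step I expect to be the main obstacle — is excluding $e\in\Ran(u_x)$, since attaining $e$ is exactly how a ``middle'' section could quietly turn continuous and break the statement. Suppose $U(x,z_0)=e$; the sandwich forces $z_0=y$, so $U(x,y)=e$. As $x\neq e$, Lemma \ref{eeq} gives either $x<e<y$ or $y<e<x$. In the first case I apply Proposition \ref{procon} to the pair $(x,y)$, obtaining continuity of $U$ on $\uint^2\setminus(\lint{0,x}\cup\rint{y,1})^2$; because $x_1<x$ while the second coordinate $y$ lies outside $\lint{0,x}\cup\rint{y,1}$, the point $(x_1,y)$ sits in the continuity region, so the section $u_{x_1}$ is continuous at $y$ — contradicting the hypothesis. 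The case $y<e<x$ is symmetric: apply Proposition \ref{procon} with the pair $(y,x)$ and test the point $(x_2,y)$. Hence $e\notin\Ran(u_x)$, and by the previous paragraph $u_x$ is discontinuous.

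Finally, $u_x$ has a single discontinuity point (Proposition \ref{atmost}), and since $U(x,z)<e$ for $z<y$ and $U(x,z)>e$ for $z>y$, that unique point can only be $y$. Running this over all $x\in\opint{x_1,x_2}$ and re-adding the endpoints gives the claim. Beyond the Proposition \ref{procon} contradiction, the one subtlety to watch is the degenerate possibility $x=e$, which the preliminary observation removes.
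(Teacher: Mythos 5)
Your proof is correct, and its skeleton matches the paper's: both reduce, via the monotonicity sandwich and Proposition \ref{atmost}, to excluding the possibility $U(x,y)=e$ for an interior $x$, and both derive the contradiction from Proposition \ref{procon} by showing that one of $u_{x_1},u_{x_2}$ would then be forced to be continuous at $y$. The one genuine difference is how Proposition \ref{procon} is invoked in that key step. The paper first passes through associativity, writing $U(U(x,x),U(y,y))=e$, deduces $U(x,x)<x_1$ from $U(x_1,U(y,y))>e$ (which needs Lemma \ref{noid} to rule out idempotency of $x$ and $y$), and only then applies Proposition \ref{procon} with the wider pair $(U(x,x),U(y,y))$, so that the entire section $\{x_1\}\times\uint$ lies in the continuity region and $u_{x_1}$ is continuous everywhere. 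You apply Proposition \ref{procon} directly to the pair $(x,y)$ and observe that the single point $(x_1,y)$ already lies outside the excluded square $(\lint{0,x}\cup\rint{y,1})^2$, since its second coordinate equals $y$; joint continuity at that one point already contradicts the assumed discontinuity of $u_{x_1}$ at $y$. This is a genuine simplification: it bypasses the associativity step and the idempotency discussion entirely, at no cost. Your preliminary exclusion of $x=e$ (via $e\notin\opint{x_1,x_2}$) plays the same role as the paper's treatment of the subcase $x=y=e$, and your detour through Lemma \ref{lemconux} (continuity of $u_x$ reduced to $e\in\Ran(u_x)$ once conditions (i) and (ii) fail) is equivalent to the paper's direct observation that the sandwich leaves only the alternatives ``$u_x$ discontinuous at $y$'' or ``$U(x,y)=e$''. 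I see no gap.
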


\begin{proof} Assume $x\in \opint{x_1,x_2}.$
Since $u_{x_1}$  is non-continuous in $y$ we have
 $U(x_1,z)>e$ for all $z>y$ and the monotonicity gives  $U(x,z)>e$ for all $z>y.$ Since $u_{x_2}$  is non-continuous in $y$ we have
 $U(x_2,z)<e$ for all $z<y$ and the monotonicity gives $U(x,z)<e$ for all $z<y.$ Thus $u_x$ is either non-continuous in $y$ or
 $U(x,y)=e.$  
\textcolor{red}{Assume that $U(x,y)=e.$ If $x=y=e$ then $x_1<e<x_2$ and we get $$e<U(x_1,x_2)<e$$ what is a contradiction.} 
   Therefore \textcolor{red}{by Lemma \ref{noid} the points $x$ and $y$ are not idempotent elements of $U$ and $x\neq e,$ $y\neq e.$
 \newline
   Suppose that $y>e.$
   Then 
   $U(x,x,y,y)=e$ with $U(x_1,y,y)>e$ implies $U(x,x)<x_1<x$ and 
  by Proposition \ref{procon}  $U$ is continuous on $\uint^2 \setminus (\lint{0,U(x,x)}\cup \rint{U(y,y),1})^2.$ Then, however, $u_{x_1}$ is continuous,
   what is a contradiction.
  \newline In the case when $y<e$ then $U(x,x,y,y)=e$ with $U(x_2,y,y)<e$ implies 
   $x<x_2<U(x,x)$ and
using Proposition \ref{procon} again we obtain that  $u_{x_2}$ is continuous,
   what is a contradiction.
       Therefore in both cases $U(x,y)\neq e$ and thus $u_x$ is  non-continuous in $y.$ }
   \end{proof}

\begin{example}
Assume a representable uninorm $U_1 \colon\uint^2\lran \uint$ with the neutral element $e_1$ and
a uninorm $U_2\in \mathcal{U}_{\max}$  with the neutral element $e_2=\frac{1}{2}.$
Let $U_1^*$ be a  transformation of $U_1$ to $(\lint{0,\frac{1}{3}}\cup \cint{\frac{2}{3},1})^2$ given by \eqref{unitr},
 and let $U_2^*$ be a linear transformation of $U_2$ to $\cint{\frac{1}{3},\frac{2}{3}}^2.$
Then the ordinal sum of semigroups $G_{\alpha} = (\lint{0,\frac{1}{3}}\cup\cint{\frac{2}{3},1},U_1^*),$
$G_{\beta}= (\cint{\frac{1}{3},\frac{2}{3}},U_2^*),$ with $\alpha < \beta,$
is a semigroup $(\uint,U),$ where $U$ is a uninorm.
We can find the structure of $U$ on  Figure \ref{figst2}. Here $u_{\frac{1}{2}}$ is continuous and $u_0$ ($u_1$) is continuous if $U_1$ is conjunctive (disjunctive). In all other cases $u_x$ is non-continuous. Further, $u_{\frac{1}{3}}$ is non-continuous in
$e=\frac{1}{2}$ and $u_{\frac{2}{3}}$ is non-continuous in
$\frac{1}{3}.$

\label{exampc}
\end{example}

\begin{figure}[htb] \begin{center}
\begin{picture}(150,150)
\put(0,0){ \line(1,0){150} }
\put(0,0){ \line(0,1){150} }
\put(150,150){ \line(-1,0){150} }
\put(150,150){ \line(0,-1){150} }

\put(0,50){ \line(1,0){150} }
\put(50,0){ \line(0,1){150} }
\put(0,100){ \line(1,0){150} }
\put(100,0){ \line(0,1){150} }

\put(0,150){ \line(1,-1){50} }
\put(100,50){ \line(1,-1){50} }

\put(75,50){ \line(0,1){50} }
\put(50,75){ \line(1,0){50} }
\linethickness{0.5mm}
\put(75,50){ \line(0,1){23} }
\put(50,75){ \line(1,0){23} }
\put(50,75){ \line(0,1){25} }
\put(75,50){ \line(1,0){25} }
\put(79,75){\circle{4}}

\put(60,60){$U_2^*$}
\put(85,85){$U_2^*$}
\put(56,80){$\max$}
\put(82,60){$\max$}

\put(20,20){$U_1^*$}
\put(120,120){$U_1^*$}
\put(70,120){$\max$}
\put(120,70){$\max$}
\put(70,20){$\min$}
\put(20,70){$\min$}
\put(15,110){$U_1^*$}
\put(30,130){$U_1^*$}
\put(115,10){$U_1^*$}
\put(130,30){$U_1^*$}
\end{picture} \end{center}
\caption{The uninorm $U$ from Example \ref{exampc}. The oblique and bold lines denote the points of discontinuity of $U.$} \label{figst2}
\end{figure}

Now we will show how can be a point of discontinuity of a uninorm $U$ related to the non-continuity of corresponding functions $u_x.$

\textcolor{red}{
\begin{proposition} Let $U\colon \uint^2\lran \uint$ be a uninorm, $U\in \mathcal{U}.$
 Then $U$ is non-continuous in $(x_0,y_0)\in \uint^2,$ $(x_0,y_0)\neq (e,e),$ if and only if one of the following is satisfied
  \begin{mylist} \item $u_{x_0}$ is non-continuous in $y_0,$ \item $u_{y_0}$ is non-continuous in $x_0,$ \item
  there exist $\varepsilon_1>0$ and $\varepsilon_2>0$ such that $u_z$ is non-continuous in $x_0$ and $u_v$ is non-continuous in $y_0$
 either for all $z\in \rint{y_0,y_0+\varepsilon_1},$ $v\in \rint{x_0,x_0+\varepsilon_2},$  or for all  $z\in \lint{y_0-\varepsilon_1,y_0},$ $v\in \lint{x_0-\varepsilon_2,x_0}.$ \end{mylist}  \label{prononco}
\end{proposition}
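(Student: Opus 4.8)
The plan is to reduce joint continuity of the bimonotone function $U$ to the comparison of two one-sided limits at $(x_0,y_0)$ and then read off (iii) from the jumps these limits force on nearby cuts. Since $U$ is non-decreasing in both variables, the two pure corner limits $U^{--}=\sup_{x<x_0,\,y<y_0}U(x,y)$ and $U^{++}=\inf_{x>x_0,\,y>y_0}U(x,y)$ exist, and for every $\delta>0$ each value of $U$ in the box around $(x_0,y_0)$ is squeezed between $U(x_0-\delta,y_0-\delta)$ and $U(x_0+\delta,y_0+\delta)$; hence the oscillation over the box tends to $U^{++}-U^{--}$, and (at an interior point) $U$ is continuous at $(x_0,y_0)$ \emph{iff} $U^{--}=U(x_0,y_0)=U^{++}$. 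As $U^{--}\le U(x_0,y_0)\le U^{++}$, discontinuity is equivalent to $U^{--}<U(x_0,y_0)$ or $U^{++}>U(x_0,y_0)$. When $\neg$(i),$\neg$(ii) hold, continuity of $u_{x_0}$ at $y_0$ and of $u_{y_0}$ at $x_0$ (via commutativity) pin the two \emph{mixed} corner limits to $U(x_0,y_0)$, so the jump is necessarily carried by a pure corner, which is exactly what the right/left dichotomy in (iii) encodes.

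For sufficiency, (i) and (ii) are immediate, since a jump of $u_{x_0}$ at $y_0$ (resp. of $u_{y_0}$ at $x_0$) directly produces a sequence along which $U$ does not return to $U(x_0,y_0)$. For (iii) I would treat the right-hand alternative, the left one being symmetric. For each $v\in\rint{x_0,x_0+\varepsilon_2}$, Proposition \ref{atmost} gives $U(v,w)<e$ for $w<y_0$ and $U(v,w)>e$ for $w>y_0$. Monotonicity in the first variable propagates these: $U(x_0,w)\le U(v,w)<e$ for all $w<y_0$, so $U(x_0,y_0)=\lim_{w\uparrow y_0}U(x_0,w)\le e$ by continuity of $u_{x_0}$; and $U(x,w)>e$ throughout the upper-right quadrant, so $U^{++}\ge e$. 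It then remains to exclude $U(x_0,y_0)=e$: if it held, Lemma \ref{eeq} and Remark \ref{remad} would give $x_0\in\opint{a,d}$ with $u_v$ continuous for every $v\in\opint{a,d}$, contradicting the discontinuity of $u_v$ for $v$ slightly above $x_0$. Hence $U(x_0,y_0)<e\le U^{++}$ and $U$ is discontinuous at $(x_0,y_0)$.

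For necessity, assume $U$ is discontinuous at $(x_0,y_0)\ne(e,e)$ while (i) and (ii) fail. By the criterion above, $U^{--}<U(x_0,y_0)$ or $U^{++}>U(x_0,y_0)$; I would treat $U^{++}>U(x_0,y_0)=:c$ and set $c'=U^{++}$, the other case yielding the left-hand alternative. For any $v>x_0$ the point $(v,w)$ lies in the upper-right quadrant once $w>y_0$, so $U(v,w)\ge c'$ and $\lim_{w\downarrow y_0}U(v,w)\ge c'$; meanwhile continuity of $u_{y_0}$ at $x_0$ gives $U(v,y_0)\to c<c'$ as $v\downarrow x_0$, so by monotonicity there is $\varepsilon_2>0$ with $U(v,y_0)<c'$ for all $v\in\rint{x_0,x_0+\varepsilon_2}$. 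For such $v$ the right limit of $u_v$ at $y_0$ strictly exceeds its value, so $u_v$ is discontinuous; as it has a unique discontinuity by Proposition \ref{atmost}, it is discontinuous precisely at $y_0$. Exchanging the two variables and using continuity of $u_{x_0}$ at $y_0$ produces, in the same way, an $\varepsilon_1>0$ with $u_z$ discontinuous at $x_0$ for all $z\in\rint{y_0,y_0+\varepsilon_1}$. This is exactly the right-hand alternative of (iii).

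The hard part will be the bookkeeping in the necessity direction: one must check that the manufactured discontinuities occupy a full one-sided interval and are located \emph{exactly} at $y_0$ (resp. $x_0$), which is precisely where the uniqueness of the discontinuity point (Proposition \ref{atmost}) and the two cross-continuities from $\neg$(i), $\neg$(ii) are indispensable. A secondary technical point is justifying the reduction to the two pure corner limits and handling the boundary cases $x_0\in\{0,1\}$ or $y_0\in\{0,1\}$, where one quadrant is empty; there only the surviving corner limit can carry the jump, so only the matching (right or left) alternative of (iii) can arise, and the argument carries over verbatim on the available quadrant.
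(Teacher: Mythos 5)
Your argument is correct, but the necessity direction runs on a genuinely different engine than the paper's. The paper deduces from Proposition \ref{KDpro} that a joint discontinuity forces, in every neighbourhood of $(x_0,y_0)$, values of $U$ both below and above $e$; it then introduces the auxiliary set $W$ of points separating the sub-$e$ region from the super-$e$ region, shows $(x_0,y_0)\in W$, and combines this with the $\varepsilon$-neighbourhoods supplied by $\neg$(i), $\neg$(ii) to manufacture the jumps of $u_z$ at $x_0$ and $u_v$ at $y_0$. You instead use the elementary four-corner oscillation criterion for bimonotone functions (discontinuity at an interior point iff $U^{--}<U(x_0,y_0)$ or $U^{++}>U(x_0,y_0)$), which makes no reference to $e$ at all and replaces both Proposition \ref{KDpro} and the set $W$; the jump $c<c'$ is then propagated to the nearby cuts directly by monotonicity and the cross-continuity from $\neg$(i), $\neg$(ii). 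This is cleaner and more self-contained for Proposition \ref{prononco} itself, though the paper's $W$ is not wasted effort there, since the same level-set picture is reused in Theorem \ref{mufu}. Your sufficiency direction is essentially the paper's: Proposition \ref{atmost} locates the cut values relative to $e$, and the case $U(x_0,y_0)=e$ is excluded via Remark \ref{remad} (the paper cites Proposition \ref{procon} directly, to the same effect). Two small points to tidy up: in the sufficiency of (iii) your appeal to ``continuity of $u_{x_0}$'' needs the harmless remark that otherwise (i) already holds; and at the corner points $(0,1)$, $(1,0)$ both pure corner limits are absent, so the statement of (iii) must be read as requiring the one-sided intervals to be nonempty subintervals of $\uint$ --- an edge case the paper's proof also glosses over by restricting to $\cint{0,e}\times\cint{e,1}$.
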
}

\begin{proof} \textcolor{red}{Suppose that  $U$ is non-continuous in $(x_0,y_0)\in \uint^2.$ Then due to Proposition \ref{procon} $U(x_0,y_0)\neq e.$ }
Since $T_U$ and $S_U$ are continuous we have $(x_0,y_0) \in \cint{0,e}\times \cint{e,1} \cup \cint{e,1}\times \cint{0,e}. $ We will assume $(x_0,y_0) \in \cint{0,e}\times \cint{e,1}$ (the other case is analogous).
From Proposition \ref{KDpro} it follows that if $U$ is non-continuous in $(x_0,y_0)\in \uint^2$ then for all $\delta_1>0$ and all $\delta_2>0$ there exist $x\in \opint{x_0-\delta_1,x_0+\delta_1}$ and  $y\in \opint{y_0-\delta_2,y_0+\delta_2}$ such that
either $u_x$ is  non-continuous in $y$ or $u_y$ is non-continuous in $x.$ Thus $U$ on $\cint{x_0-\delta_1,x_0+\delta_1}\times \cint{y_0-\delta_2,y_0+\delta_2}$ attain values smaller than $e$ and bigger than $e$ as well. Let $W$ be a  subset of $\uint^2$ such that $(x,y)\in W$ if $U(x_1,y_1)<e$ for all $x_1<x,y_1<y$  and $U(x_2,y_2)>e$ for all $x_2>x,y_2>y.$ Then the set $\cint{x_0-\delta_1,x_0+\delta_1}\times \cint{y_0-\delta_2,y_0+\delta_2}\cap W$ is non-empty for all  $\delta_1>0$ and all $\delta_2>0.$ Thus
$(x_0,y_0)\in W.$

\

If $u_{x_0}$ is continuous in $y_0$ then there exists an $\varepsilon_1>0$ such that  either
$u_{x_0}(z)<e$ for all $z\in \cint{y_0-\varepsilon_1,y_0+\varepsilon_1}$ or $u_{x_0}(z)>e$ for all $z\in \cint{y_0-\varepsilon_1,y_0+\varepsilon_1}.$ Similarly, if $u_{y_0}$ is continuous in $x_0$ then there exists an $\varepsilon_2>0$ such that  either
$u_{y_0}(v)<e$ for all $v\in \cint{x_0-\varepsilon_2,x_0+\varepsilon_2}$ or $u_{y_0}(v)>e$ for all $v\in \cint{x_0-\varepsilon_2,x_0+\varepsilon_2}.$ Since we cannot have both $U(x_0,y_0)<e$ and $U(x_0,y_0)>e$ we have either $u_{y_0}(v)<e$ and $u_{x_0}(z)<e$ for all $z\in \cint{y_0-\varepsilon_1,y_0+\varepsilon_1}$ and all $v\in \cint{x_0-\varepsilon_2,x_0+\varepsilon_2},$ or  $u_{y_0}(v)>e$ and $u_{x_0}(z)>e$ for all $z\in \cint{y_0-\varepsilon_1,y_0+\varepsilon_1}$ and all $v\in \cint{x_0-\varepsilon_2,x_0+\varepsilon_2}.$ As these two cases are analogous we will assume
$$u_{y_0}(v)<e \text{ and } u_{x_0}(z)<e \text{ for all } z\in \cint{y_0-\varepsilon_1,y_0+\varepsilon_1} \text{ and all } v\in \cint{x_0-\varepsilon_2,x_0+\varepsilon_2}.$$ Then $U(x_0,y)<e$ for  $ y\in \cint{y_0-\varepsilon_1,y_0+\varepsilon_1}$ and
$U(x,y_0)<e$ for $x\in \cint{x_0-\varepsilon_2,x_0+\varepsilon_2}.$ However, \textcolor{red}{since $(x_0,y_0)\in W,$}  $U(f,g)>e$ for all $f>x_0,g> y_0.$
Thus $u_z$ is non-continuous in $x_0$ and $u_v$ is non-continuous in $y_0$
 for all $z\in \rint{y_0,y_0+\varepsilon_1},$ $v\in \rint{x_0,x_0+\varepsilon_2}.$
 
\textcolor{red}{Vice versa, if  $u_{x_0}$ is non-continuous in $y_0,$ or if $u_{y_0}$ is non-continuous in $x_0,$ then evidently $U$ is non-continuous in $(x_0,y_0).$ Suppose that  there exist $\varepsilon_1>0$ and $\varepsilon_2>0$ such that $u_z$ is non-continuous in $x_0$ and $u_v$ is non-continuous in $y_0$
 either for all $z\in \rint{y_0,y_0+\varepsilon_1},$ $v\in \rint{x_0,x_0+\varepsilon_2},$  or for all  $z\in \lint{y_0-\varepsilon_1,y_0},$ $v\in \lint{x_0-\varepsilon_2,x_0}.$ Then $(x_0,y_0)\in W$ and either $U(x_0,y_0)=e,$ or $U$ is non-continuous in $(x_0,y_0).$ However, if $U(x_0,y_0)=e$ then since $(x_0,y_0)\neq (e,e)$ Lemma \ref{noid} implies that $x_0$ and $y_0$ are not idempotents and Proposition \ref{procon} implies that  $U$ is continuous on $\uint^2\setminus (\lint{0,U(x_0,x_0)}\cup\rint{U(y_0,y_0),1})^2$ if $x_0<e<y_0$ and on  $\uint^2\setminus (\lint{0,U(y_0,y_0)}\cup\rint{U(x_0,x_0),1})^2$ if $x_0>e>y_0.$ In both cases we obtain a contradiction with   the non-continuity of $u_z$ and $u_v.$ Therefore $U(x_0,y_0)\neq e$ and thus  $U$ is non-continuous in $(x_0,y_0).$  }
\end{proof}

\begin{example}
Assume two t-norms $T_1,T_2\colon \uint^2\lran \uint,$ such that $T_2$ has no zero divisors,
 and a t-conorm $S\colon \uint^2\lran \uint.$
Let $T_1^*$ ($T_2^*$) be a linear transformation of $T_1$ ($T_2$) to $\cint{0,\frac{1}{3}}^2$ ($\cint{\frac{1}{3},\frac{2}{3}}^2$),
 and let $S_2^*$ be a linear transformation of $S_2$ to $\cint{\frac{2}{3},1}^2.$
Then the ordinal sum of semigroups $G_{\alpha} = (\cint{0,\frac{1}{3}},T_1^*),$
$G_{\beta}= (\cint{\frac{1}{3},\frac{2}{3}},T_2^*),$
$G_{\gamma}= (\rint{\frac{2}{3},1},S_2^*),$
 with $\alpha < \gamma < \beta,$
is a semigroup $(\uint,U),$ where $U$ is a uninorm (see Figure \ref{figex}).
    If we define an operation $V\colon \uint^2\lran \uint$ by
    $$V = \begin{cases} \min(x,y) &\text{if $x=\frac{1}{3}, y\in \cint{\frac{2}{3},1},$} \\
    \min(x,y) &\text{if $y=\frac{1}{3}, x\in \cint{\frac{2}{3},1},$}
    \\U(x,y) &\text{otherwise,} \end{cases}$$
   then $V$ is also a uninorm. Here  $V$ is non-continuous in the point $(\frac{1}{3},\frac{2}{3}),$
   however, both $v_{\frac{1}{3}}$ and $v_{\frac{2}{3}}$ are continuous.
   Note that $(\uint,V)$ is an ordinal sum of semigroups $G_{\alpha},G_{\gamma}$ and  $G_{\beta^*}= (\rint{\frac{1}{3},\frac{2}{3}},T_2^*),$
   $G_{\delta} = (\{\frac{1}{3}\},T_2^*),$ where  $\alpha < \delta< \gamma < \beta^*.$
    \label{exext}
\end{example}

\begin{figure}[htb] \begin{center}
\begin{picture}(150,150)
\put(0,0){ \line(1,0){150} }
\put(0,0){ \line(0,1){150} }
\put(150,150){ \line(-1,0){150} }
\put(150,150){ \line(0,-1){150} }

\put(0,50){ \line(1,0){150} }
\put(50,0){ \line(0,1){150} }
\put(50,100){ \line(1,0){100} }
\put(100,50){ \line(0,1){100} }
\put(20,20){$T_1^*$}
\put(20,95){$\min$}
\put(95,20){$\min$}
\put(70,120){$\max$}
\put(120,70){$\max$}
\put(70,70){$T_2^*$}
\put(120,120){$S^*$}

\linethickness{0.5mm}
\put(50,100){ \line(0,1){50} }
\put(50,100){ \line(1,0){48} }
\put(100,50){ \line(0,1){48} }
\put(100,50){ \line(1,0){50} }

\put(104,100){\circle{4}}

\end{picture}
\end{center}
\caption{The uninorm $U$ from Example \ref{exext}. The bold  lines denote the points of discontinuity of $U.$} \label{figex}
\end{figure}



The following two results show that the set of discontinuity points of a uninorm $U\in \mathcal{U}$ from  the set
$\cint{0,e}\times \cint{e,1}$ ($\cint{e,1}\times \cint{0,e}$) is connected.

\begin{proposition} Let $U\colon \uint^2\lran \uint$ be a uninorm, $U\in \mathcal{U}.$
Let $u_{x_1}$ be non-continuous in $y_1$ and $u_{x_2}$ be non-continuous in $y_2$ for $x_1<x_2\leq e$ ($e\leq x_1<x_2$). Then for all
 $y\in \cint{y_2,y_1}$ either there exists $x^*\in \cint{x_1,x_2} $ such that $u_{x^*}$ is non-continuous in $y$ or there is an interval $\cint{c,d},$ where $y\in \cint{c,d}\subset \uint,$ and $p \in \cint{x_1,x_2}$ such that
 $u_z$ is non-continuous in $p$ for all $z\in \cint{c,d} .$
\end{proposition}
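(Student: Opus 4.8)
The plan is to package all the relevant discontinuities into a single non-increasing function and to exploit the symmetry of that function coming from commutativity. Assume first $x_1<x_2\le e$ (the case $e\le x_1<x_2$ being dual). Since $u_e$ is the identity and hence continuous, the hypotheses force $x_1<x_2<e$; moreover, by Proposition \ref{atmost} together with $U(x_i,e)=x_i<e$, a jump of $u_{x_i}$ cannot sit below $e$, so $y_1,y_2\ge e$ and thus $y\ge e$. If $y=y_1$ or $y=y_2$ the first alternative is immediate (take $x^\ast=x_1$ or $x_2$), so I may assume $y_2<y<y_1$. Now set $m(x)=\sup\{z\in\uint\mid U(x,z)<e\}$. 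Three facts drive the argument: (a) $m$ is non-increasing on all of $\uint$ (immediate from the monotonicity of $U$); (b) by commutativity $m(z)=\sup\{x\mid U(x,z)<e\}$ as well, so $m$ is the same object read along rows or along columns; and (c) combining Proposition \ref{atmost} with Lemma \ref{lemconux}, one gets that $u_x$ is non-continuous in $w$ if and only if $m(x)=w$ and $U(x,w)\ne e$. In particular $m(x_1)=y_1$ and $m(x_2)=y_2$.

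Next I would locate the height $y$ inside the graph of $m$. Put $p=\inf\{x\in\cint{x_1,x_2}\mid m(x)\le y\}$; since $m(x_2)=y_2\le y$ this set is nonempty, $p\in\cint{x_1,x_2}$, and $p\le x_2<e$. Writing $L=\lim_{x\to p^-}m(x)$ and $R=\lim_{x\to p^+}m(x)$, monotonicity gives $R\le m(p)\le L$ and, by the definition of $p$, also $R\le y\le L$ together with $m(x)>y$ for every $x<p$. Thus exactly one of two situations occurs: either $m(p)=y$, or $m$ has a genuine downward jump at $p$ whose gap straddles $y$.

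In the first situation I would first rule out $U(p,y)=e$: since $p<e$, Lemma \ref{eeq} would give $y>e$, and then Proposition \ref{procon} would make $U$ continuous on $\uint^2\setminus(\lint{0,p}\cup\rint{y,1})^2$; as $p<x_2$ (because $m(x_2)=y_2<y=m(p)$) and $x_2<y$, the point $(x_2,y_2)$ lies outside that exceptional square, contradicting the jump of $u_{x_2}$. Hence $U(p,y)\ne e$, and by fact (c) $u_p$ is non-continuous in $y$, which is the first alternative with $x^\ast=p$. In the second situation I would instead show that $S:=\{z\in\uint\mid m(z)=p\}$ is a nondegenerate interval containing $y$: using fact (b), each $x<p$ has $m(x)$ at or above the top of the gap, so $m(x)>z$ and $U(x,z)<e$, giving $m(z)\ge p$; each $x>p$ has $m(x)$ at or below the bottom of the gap --- and for $x\ge x_2$ one invokes the global monotonicity $m(x)\le m(x_2)=y_2<z$ --- so $U(x,z)\ge e$ and $m(z)\le p$. (The same inequalities, now using only the strict bound $m(x)>y$ for $x<p$, also yield $m(y)=p$, so $y\in S$ even when $y$ is an endpoint of the gap.) Excluding $U(z,p)=e$ exactly as above then shows, via fact (c), that $u_z$ is non-continuous in $p$ for all $z$ in a closed subinterval $\cint{c,d}\subset S$ with $y\in\cint{c,d}$ and $p\in\cint{x_1,x_2}$, which is the second alternative.

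The main obstacle is that the supremum defining $m(z)$ is taken over all of $\uint$, not merely over $\cint{x_1,x_2}$, so the inequality $m(z)\le p$ could fail for large $x$; this is precisely why I first reduce to $y_2<y<y_1$ and then use the global monotonicity $m(x)\le y_2<z$ for $x\ge x_2$. The remaining delicate points are purely bookkeeping: each time the value $e$ is ruled out by producing, through Proposition \ref{procon}, a continuity region that contradicts one of the two given jumps, and the endpoint behaviour of $S$ is checked by noting that $y\in\cint{c,d}$ need not be interior. Finally, the parenthetical case $e\le x_1<x_2$ follows by the symmetric argument, replacing $m$ with $\sup\{z\mid U(x,z)>e\}$.
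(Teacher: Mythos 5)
Your argument is essentially the paper's: your $m$ is the paper's function $g$ that assigns to each $x\in\cint{x_1,x_2}$ the (unique) discontinuity point of $u_x$ (shown there to exist for every such $x$ and to be non-increasing via Proposition \ref{promon}), and both proofs then split on whether the level $y$ is attained by this monotone function or falls into a gap of its range, producing the vertical segment $\{p\}\times\cint{c,d}$ of discontinuities in the latter case. The one soft spot is your parenthetical claim that $m(y)=p$ when $y$ equals the bottom endpoint $R$ of the gap: if $m(x_0)=y$ with the supremum attained, i.e.\ $U(x_0,y)<e$ for some $x_0>p$, then $m(y)\geq x_0>p$ and $y\notin S$; however that sub-case lands in the first alternative (your fact (c) applied at $x_0\in\cint{x_1,x_2}$ gives $u_{x_0}$ non-continuous in $y$), so the conclusion survives and the fix is only a reassignment of that boundary case.
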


\begin{proof}
If $u_{x_1}$ is non-continuous in $y_1$ and $u_{x_2}$ is non-continuous in $y_2$ for $x_1<x_2\leq e$ (the case when $e\leq x_1<x_2$ is analogous) then   $U(x_2,z)<e$ for all $z<y_2$ and $U(x_1,z)>e$ for all $z>y_1$ and the monotonicity
  implies that for all  $x\in \cint{x_1,x_2} $
the function $u_{x}$ is non-continuous in some point  $z\in \cint{y_2,y_1}.$
\textcolor{red}{Note that $e\notin\mathrm{Ran}(u_x)$ since otherwise by Proposition \ref{procon} either $u_{x_1}$ or $u_{x_2}$ would be continuous.}
 Assume the function $g\colon \cint{x_1,x_2}\lran \cint{y_2,y_1}$
which assigns to $v\in \cint{x_1,x_2}$ a point $w\in \cint{y_2,y_1}$ such that $u_v$ is non-continuous in $w.$
 Then by Proposition \ref{promon} the function $g$ is non-increasing. If $q\in \cint{y_2,y_1}\setminus \mathrm{Ran}(g)$ then by the monotonicity there exists a $p\in \cint{x_1,x_2}$ such that $g(d)>q$ if $d<p$ and $g(d)<q$ if $d>p.$ Further, since $g$ is monotone there exists an interval $\cint{c,d},$ such that $q\in \cint{c,d}\subset \cint{y_2,y_1}\setminus \mathrm{Ran}(g).$ Then for $z\in \cint{c,d}$ we have $U(z,v)<e$ for all $v<p$ and  $U(z,v)>e$ for all $v>p$ thus $u_z$ has a point of discontinuity in
 $p.$
\end{proof}

\begin{lemma}
Let $U\colon \uint^2\lran \uint$ be a uninorm, $U\in \mathcal{U}.$ Let $u_{x}$ be non-continuous in $y_1$ and $u_{y_2}$ be non-continuous in $x$ for some $y_1\neq y_2.$ Then for all $y\in \rint{y_1,y_2}$ ($y\in \lint{y_2,y_1}$) the function $u_y$ is non-continuous in $x.$ \label{lemconv}
\end{lemma}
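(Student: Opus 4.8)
The plan is to translate both non-continuity hypotheses into sign information about $U$ relative to $e$ by means of Proposition \ref{atmost}, and then to transport that information to the intermediate sections $u_y$ purely by monotonicity. First I would record what the hypotheses give. Since $u_x$ is non-continuous in $y_1$, Proposition \ref{atmost} yields $U(x,z)<e$ for all $z<y_1$ and $U(x,z)>e$ for all $z>y_1$. By commutativity the non-continuity of $u_{y_2}$ in $x$ means $U(z,y_2)<e$ for all $z<x$ and $U(z,y_2)>e$ for all $z>x$. Crucially, since $u_x$ is non-continuous, Lemma \ref{lemconux} forces $e\notin \mathrm{Ran}(u_x)$, so that $U(x,w)\neq e$ for every $w\in \uint$; in particular $U(x,y)\neq e$ for every candidate $y$.

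Next I would fix $y$ in the relevant interval and show that $u_y$ changes sign (relative to $e$) exactly at $x$, treating the two orderings of $y_1,y_2$ separately. If $y_1<y_2$ and $y\in \rint{y_1,y_2}$, then for $z<x$ monotonicity together with $y\leq y_2$ gives $U(y,z)\leq U(y_2,z)<e$, while for $z>x$ the inequality $y>y_1$ gives $U(x,y)>e$, whence $U(y,z)\geq U(y,x)=U(x,y)>e$. Symmetrically, if $y_2<y_1$ and $y\in \lint{y_2,y_1}$, then $y\geq y_2$ gives $U(y,z)\geq U(y_2,z)>e$ for $z>x$, and $y<y_1$ gives $U(x,y)<e$, whence $U(y,z)\leq U(y,x)=U(x,y)<e$ for $z<x$. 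In either case $U(y,z)<e$ for all $z<x$ and $U(y,z)>e$ for all $z>x$. Note that the open/closed endpoints match exactly: the strict comparison with $y_1$ (needed to push $U(x,y)$ strictly across $e$) excludes $y_1$, while the non-strict comparison with $y_2$ allows $y=y_2$.

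Finally I would conclude. Since $u_y$ is monotone, its one-sided limits at $x$ exist and satisfy $\lim_{z\to x^-}U(y,z)\leq e\leq \lim_{z\to x^+}U(y,z)$; continuity of $u_y$ at $x$ would force both to equal $U(y,x)$ and hence $U(y,x)=e$, contradicting $U(y,x)=U(x,y)\neq e$ from the first paragraph. Therefore $u_y$ is non-continuous in $x$, as claimed. The only genuine subtlety is this last step: the sign change by itself is compatible with a continuous passage through the value $e$, and it is precisely the observation that $e\notin \mathrm{Ran}(u_x)$ (hence $U(y,x)\neq e$) which rules that out and upgrades the sign change into a true jump. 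Everything else is a matter of selecting the correct monotonicity comparison in each of the two orderings.
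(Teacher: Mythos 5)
Your proof is correct and follows essentially the same route as the paper: Proposition \ref{atmost} turns both discontinuity hypotheses into sign information about $U$ relative to $e$, monotonicity transports that information to the intermediate sections $u_y$, and the fact that $U(x,y)$ lies strictly on one side of $e$ rules out a continuous passage through $e$. The only (cosmetic) difference is that the paper obtains $U(x,y)\neq e$ directly from the strict inequality $U(x,y)>e$ for $y>y_1$ rather than via $e\notin\mathrm{Ran}(u_x)$, and states the final jump argument more tersely.
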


\begin{proof}
We will assume $y_1<y_2$ (the case when $y_1>y_2$ is analogous). Then $U(x,y)>e$ for all $y>y_1$ and $U(z,y)\leq U(z,y_2)<e$ for all $z<x,$
$y\leq y_2.$ Then since $U(x,y)\neq e$ the function $u_y$ is non-continuous in $x.$
\end{proof}

In the following result we show that the set of discontinuity points of a uninorm $U\in \mathcal{U}$ has a non-empty intersection with the border of the unit square.

\begin{lemma}
Let $U\colon \uint^2\lran \uint$ be a uninorm, $U\in \mathcal{U}.$ Assume  $x<e$ ($x>e$) such that  $u_{x}$  is continuous on $\uint$ and let $u_{y}$ be non-continuous in $x.$ Then for all $q\in \cint{y,1}$  ($q\in \cint{0,y}$) the function $u_q$ is non-continuous in $x.$ \label{lemconv2}
\end{lemma}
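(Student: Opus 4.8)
The plan is to show that the hypotheses force the whole ``row'' $u_x$ to stay strictly below $e$, and then to read off the jump of $u_q$ across $e$ at the point $x$ by monotonicity. Concretely, for a fixed $q\in\cint{y,1}$ I will verify that $U(q,z)<e$ for all $z<x$, that $U(q,z)>e$ for all $z>x$, and that $U(q,x)<e$. Since $u_q$ is monotone, these three facts force a jump of $u_q$ across $e$ at $x$, which is exactly non-continuity of $u_q$ in $x$ (the value $U(q,x)<e$ rules out the degenerate possibility $U(q,x)=e$ that would be compatible with continuity).

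The first key step is the auxiliary claim that $u_x(z)<e$ for every $z\in\uint$. Since $x<e$ we have $u_x(0)=U(x,0)=0<e$, so by Lemma \ref{lemconux} the continuity of $u_x$ can only come from $u_x(1)<e$ or from $e\in\mathrm{Ran}(u_x)$. I will exclude the second possibility: if $U(x,w)=e$ for some $w$, then $w>e$ by Lemma \ref{eeq}, and Proposition \ref{procon} gives continuity of $U$ on $\uint^2\setminus(\lint{0,x}\cup\rint{w,1})^2$. As $x\notin\lint{0,x}$ and $x<w$ forces $x\notin\rint{w,1}$, the point $(y,x)$ lies outside the exceptional square, so $U$ is continuous at $(y,x)$, contradicting the assumed non-continuity of $u_y$ in $x$. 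Hence $u_x(1)<e$, and monotonicity yields $u_x(z)\le u_x(1)<e$ for all $z$.

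With the claim in hand the conclusion follows by monotonicity alone. For $z>x$, Proposition \ref{atmost} applied to the non-continuity of $u_y$ in $x$ gives $U(y,z)>e$, and since $q\ge y$ monotonicity in the first coordinate yields $U(q,z)\ge U(y,z)>e$. At the point $x$, commutativity gives $U(q,x)=u_x(q)<e$ by the claim, so $U(q,x)\neq e$; and for $z<x$ monotonicity in the second coordinate gives $U(q,z)\le U(q,x)<e$. Thus $u_q$ lies below $e$ on $\cint{0,x}$ and above $e$ for every $z>x$, so it fails to be right-continuous in $x$ and is therefore non-continuous in $x$. The case $x>e$ is completely analogous, yielding $u_x(z)>e$ for all $z$ and the conclusion for $q\in\cint{0,y}$.

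The only delicate point is that the desired lower estimate $U(q,z)<e$ for $z<x$ runs against monotonicity, since $q\ge y$ pushes values up rather than down. This is precisely why the auxiliary claim $u_x(z)<e$ is needed: it converts that awkward lower estimate into the single inequality $U(q,x)<e$, from which the rest is immediate. Establishing the claim, and in particular ruling out $e\in\mathrm{Ran}(u_x)$ via Proposition \ref{procon}, is the main obstacle; everything after it is routine monotonicity bookkeeping.
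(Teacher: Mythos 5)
Your proof is correct and follows essentially the same route as the paper's: first ruling out $e\in\mathrm{Ran}(u_x)$ via Proposition \ref{procon} to conclude $U(x,z)<e$ for all $z$, then using Proposition \ref{atmost} and monotonicity to transfer the jump of $u_y$ at $x$ to $u_q$. The only cosmetic difference is that you apply Proposition \ref{procon} directly with the pair $(x,w)$ and use the half-open intervals to exclude $x$, whereas the paper first passes to non-idempotents $a<x$, $b>w$; both are valid.
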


\begin{proof} We will assume $x<e$ (the case for $x>e$ is analogous).
If $U(x,z)=e$ for some $z\in \uint $ then by Lemma \ref{noid} the points $x,z$ are not idempotents and Proposition \ref{procon} implies that $U$ is continuous on
$\uint^2\setminus (\lint{0,a}\cup\rint{b,1})^2$ for some $a<x$ and $b>z.$
Therefore for all $y\in \uint$ the function  $u_{y}$ is continuous in $x.$ Since $x<e $ by Lemma \ref{lemconux} we have  $u_x(1)<e,$  i.e., $U(x,z)<e$ for all $z\in \uint.$
If $u_{y}$ is non-continuous in $x$ then  $U(p,y)>e$ for all $p>x$ and $U(p,y)<e$ for all $p<x.$
Assume any $q\in \cint{y,1}.$ Then $U(p,q)\leq U(x,q) <e$ if $p<x$ and  $U(p,q)\geq U(p,y)>e$ if $p>x,$ i.e.,$ u_q$ is non-continuous in $x.$
\end{proof}

Next we define a set-valued function.

\begin{definition}
A mapping $p\colon \uint\lran \mathcal{P}(\uint)$ is called a set-valued function on $\uint$ if to every $x\in \uint$ it assigns a subset of $\uint,$ i.e.,
$p(x)\subseteq \uint.$ Assuming the standard order on $\uint,$ a set-valued function $p$ is called
\begin{mylist}
\item \emph{non-increasing} if  for all $x_1,x_2\in \uint,$ $x_1<x_2,$ we have
$y_1\geq y_2$ for all $y_1\in p(x_1)$ and all $y_2\in p(x_2)$  and thus the cardinality $\mathrm{Card}(p(x_1)\cap p(x_2))\leq 1,$
\item  \emph{symmetric} if $y\in p(x)$ if and only if $x\in p(y).$
\end{mylist}
 The graph of a set-valued function $p$ will be denoted by $G(p),$ i.e., $(x,y)\in G(p)$ if and only if
$y\in p(x).$
\end{definition}

The following is evident.

\begin{lemma}
A symmetric set-valued function $p\colon \uint\lran$ $\mathcal{P}(\uint)$ is surjective, i.e.,  for all $y\in \uint$ there exists an $x\in \uint$ such that $y\in p(x),$ if and only if we have $p(x)\neq \emptyset$ for all $x\in \uint.$
 \label{lemnone}
\end{lemma}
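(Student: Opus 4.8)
The plan is to prove both implications of the equivalence directly from the symmetry condition, since neither the non-increasing property nor any topological structure is needed here. The core observation I would exploit is that symmetry, $y\in p(x)$ if and only if $x\in p(y)$, lets me convert a statement about membership in one fibre into a statement about membership in another. This alone bridges the two conditions.

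For the forward direction, I would assume $p$ is surjective and fix an arbitrary $x\in\uint$. Surjectivity applied to the value $x$ supplies some $z\in\uint$ with $x\in p(z)$. Symmetry then yields $z\in p(x)$, so $p(x)$ is non-empty; since $x$ was arbitrary, $p(x)\neq\emptyset$ for all $x\in\uint$. For the converse, I would assume $p(x)\neq\emptyset$ for all $x$ and fix an arbitrary target $y\in\uint$. Because $p(y)$ is non-empty, I can pick some $z\in p(y)$; symmetry gives $y\in p(z)$, which exhibits a witness $x=z$ with $y\in p(x)$. As $y$ was arbitrary, $p$ is surjective.

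I do not expect any genuine obstacle: the statement is, as the paper remarks, evident, and the only ingredient is a single application of the symmetry clause in each direction. The only point requiring minor care is to keep straight which variable plays the role of the index and which plays the role of the target value when invoking surjectivity and non-emptiness, but this is purely bookkeeping and presents no real difficulty.

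\begin{proof}
Assume first that $p$ is surjective and fix $x\in\uint.$ By surjectivity applied to $x$ there exists $z\in\uint$ with $x\in p(z),$ and symmetry yields $z\in p(x),$ so $p(x)\neq\emptyset.$ Conversely, assume $p(x)\neq\emptyset$ for all $x\in\uint$ and fix $y\in\uint.$ Since $p(y)\neq\emptyset$ there is some $z\in p(y),$ and symmetry gives $y\in p(z),$ i.e., there exists $x=z$ such that $y\in p(x).$ Hence $p$ is surjective.
\end{proof}
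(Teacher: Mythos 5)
Your proof is correct; the paper simply declares this lemma evident and gives no proof, and your two applications of symmetry (once to the target of surjectivity, once to a witness of non-emptiness) are exactly the intended argument.
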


 The graph of a symmetric, surjective,  non-increasing  set-valued function $p\colon \uint\lran$ $\mathcal{P}(\uint)$ is a connected line (i.e., a connected set with no interior) containing points $(0,1)$ and $(1,0)$.
Indeed, the monotonicity of such a set-valued function ensures that the graph of $p$ has no interior. Further, since $p$ is surjective, monotone and symmetric the graph of $p$ contains points $(0,1)$ and $(1,0).$ If $G(p)$ is not a connected set then either $p(x)$ is not a connected set for some $x\in \uint,$ which, however, due to the monotonicity implies  that $p$ is not surjective, or  due to the monotonicity there exists an $x\in \uint$ such that either
$$ \inf(\bigcup\limits_{q<x} p(q)) > \sup(p(x)),$$  or $$ \sup(\bigcup\limits_{q>x} p(q)) < \inf(p(x)),$$  which, however,  due to the symmetry  implies that $p$ is not surjective.

The previous results can be summarized in the following theorem. First, however, we introduce one remark.

\textcolor{red}{
\begin{remark}
For any uninorm $U\colon \uint^2\lran \uint,$  $U\in \mathcal{U}$ denote $A=\inf\{x\mid U(x,0)>0\},$  $B=\sup\{x\mid U(x,1)<1\}$ and let $a,d\in \uint$ be such that $U(x,y)=e$ for some $y\in \uint$ if and only if $x\in \opint{a,d}\cup \{e\}$ (see Remark \ref{remad}). If $U$ is conjunctive, i.e., $U(0,1)=0,$ 
 then $A$ is the infimum of an empty set on $\uint,$ i.e., $A=1.$ 
If $U$ is disjunctive, i.e., $U(0,1)=1,$
 then $B$ is the supremum of an empty set on $\uint,$ i.e., $B=0.$ 
   Therefore we have
 either $A=1, B\neq 0,$ or $A\neq 1,B=0,$ or $A=1, B= 0.$
 Further,   $U(x,0)\leq e$ for some $x\in \uint$ implies $$0=U(e,0)\geq U(x,0,0)=U(x,0) $$ and thus for all $x\in \uint$ either $U(x,0)=0$
or $U(x,0)>e.$ Therefore $U$ is non-continuous in $(0,A)$ if $A\neq 1.$ Similarly, 
 $U(x,1)\geq e$ for some $x\in \uint$ implies $$  1 =U(e,1)\leq U(x,1,1)=U(x,1) $$ and thus for all  $x\in \uint$ either $U(x,1)=1$
or $U(x,1)<e.$ Therefore $U$ is non-continuous in $(B,1)$ if $B\neq 0.$
Finally, if $A=1, B=0$ then $U(x,0)=0$ for all $x<1$ and $U(x,1)=1$ for all $x>0$ and therefore $U$ is non-continuous in $(0,1).$ 
 \newline Due to Remark \ref{remad} either $a=d=e,$ or
  $U$ is continuous on $\opint{a,d}\times \uint \cup \uint \times \opint{a,d}$ 
   and therefore we  have $0\leq B \leq a \leq e \leq d \leq A \leq 1.$
\label{pomfun}
\end{remark}}

\begin{theorem}
Let $U\colon \uint^2\lran \uint$ be a  uninorm, $U\in \mathcal{U}.$ Then there exists a symmetric,  surjective, non-increasing  set-valued function $r$ on $\uint$ such that $U$ is continuous on
$\uint^2\setminus R,$ where $R=G(r).$ Note that $U$ need not to be non-continuous in all points from $R.$ \label{mufu}
\end{theorem}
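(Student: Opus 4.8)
The plan is to package the discontinuity set of $U$ as (a subset of) the completed, symmetrized graph of the single function recording, for each $x$, the level at which the section $U(x,\cdot)$ crosses $e$. Concretely, put
$$\gamma(x)=\sup\{z\in\uint : U(x,z)<e\},\qquad x\in\uint,$$
with the convention $\sup\emptyset=0$. Monotonicity of $U$ gives $\{z:U(x_2,z)<e\}\subseteq\{z:U(x_1,z)<e\}$ for $x_1<x_2$, so $\gamma$ is non-increasing, and by commutativity $\gamma(y)=\sup\{z:U(z,y)<e\}$ as well, so $\gamma$ treats both variables symmetrically. I would then define the set-valued function $r$ by filling the jumps of $\gamma$ and symmetrizing,
$$r(x)=\Bigl[\inf_{t>x}\gamma(t),\ \sup_{t<x}\gamma(t)\Bigr]\ \cup\ \Bigl\{y\in\uint : x\in\bigl[\inf_{s>y}\gamma(s),\ \sup_{s<y}\gamma(s)\bigr]\Bigr\},$$
using the boundary conventions $\sup_{t<0}\gamma(t)=1$ and $\inf_{t>1}\gamma(t)=0$ to anchor the corners $(0,1),(1,0)$, which by Remark \ref{pomfun} are forced into the discontinuity pattern. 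Set $R=G(r)$.

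Symmetry and surjectivity are then immediate. The second set in the definition of $r$ is, by commutativity, precisely the transpose of the first, so $r$ is symmetric. Since $\gamma$ is non-increasing we have $\inf_{t>x}\gamma(t)\le\gamma(x)\le\sup_{t<x}\gamma(t)$, hence $\gamma(x)\in r(x)$ and $r(x)\neq\emptyset$ for every $x$; Lemma \ref{lemnone} then gives surjectivity.

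Next I would show $R$ contains every discontinuity point. As $T_U,S_U$ are continuous, $U$ is continuous on $\cint{0,e}^2$ and on $\cint{e,1}^2$, and it is continuous at $(e,e)$, so any discontinuity $(x_0,y_0)$ lies in the two mixed blocks and falls under Proposition \ref{prononco}. In case (i) $u_{x_0}$ jumps at $y_0$, and Proposition \ref{atmost} gives $\gamma(x_0)=y_0$, so $(x_0,y_0)\in G(\gamma)\subseteq R$; case (ii) is the transpose, covered by the second set. The essential case is (iii), where both sections are continuous at the point yet $U$ is jointly discontinuous: here $\gamma$ has a jump straddling $y_0$ (either $\inf_{t>x_0}\gamma(t)\le y_0<\gamma(x_0)\le\sup_{t<x_0}\gamma(t)$ or, symmetrically, $\inf_{t>x_0}\gamma(t)\le\gamma(x_0)<y_0\le\sup_{t<x_0}\gamma(t)$), so $y_0$ lies in the filled jump interval and $(x_0,y_0)\in R$. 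This is exactly why the jumps of $\gamma$ must be filled rather than just its graph taken.

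The main obstacle is the non-increasing property of $r$, i.e. showing the completed graph and its transpose fit together with no increasing pair. The two facts I would lean on both follow from monotonicity of $U$ and the definition of $\gamma$: $y<\inf_{t>x}\gamma(t)$ forces $U(x,y)<e$, and $y>\sup_{t<x}\gamma(t)$ forces $U(x,y)\ge e$. Given $(x_1,y_1),(x_2,y_2)\in R$ with $x_1<x_2$ and, for contradiction, $y_1<y_2$, one locates an intermediate point whose $U$-value is simultaneously forced to be $<e$ (from membership of $(x_1,y_1)$) and $\ge e$ (from membership of $(x_2,y_2)$), using commutativity to pass between $G(\gamma)$ and its transpose; the delicate sub-case is the mixed one, where one point sits on $G(\gamma)$, the other on its transpose, and the two jump-fills interact. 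The preceding connectedness propositions, together with Lemmas \ref{lemconv} and \ref{lemconv2}, which describe precisely how the discontinuity set runs as a connected monotone line touching the boundary, are what make this bookkeeping go through; finally Remark \ref{pomfun} identifies the endpoints $(0,A)$, $(B,1)$ or $(0,1)$ and confirms that the corner conventions above are consistent with the actual discontinuities of $U$.
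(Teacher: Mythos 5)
Your route is genuinely different from the paper's and, up to one formula slip, it works. The paper defines $r$ directly from the actual discontinuity set $R^*$, patched together with the level set $\{y\mid U(x,y)=e\}$ on $\opint{a,d}$ and the boundary data $A,B$ of Remark \ref{pomfun}, and then proves that $r$ is non-increasing by a four-case analysis according to whether each of two given points lies in the set $P$ where both sections are continuous, leaning on Propositions \ref{promon} and \ref{prononco}. You instead take the single crossing function $\gamma(x)=\sup\{z\mid U(x,z)<e\}$, complete its graph by filling the jumps, and symmetrize; monotonicity of the completed graph of a monotone function is then essentially free, and the work shifts to showing that every discontinuity point lands on it, which you do correctly: case (i) of Proposition \ref{prononco} gives $\gamma(x_0)=y_0$ via Proposition \ref{atmost}, case (ii) is the transpose, and in case (iii) the one-sided constancy of the discontinuity locus forces $y_0$ into the jump of $\gamma$ at $x_0$. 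Your $R$ is a superset of the paper's on $\opint{a,d}^2$, which is harmless since the theorem explicitly allows continuity points inside $R$.

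Two points need attention. First, the jump-fill interval is written with $\inf$ and $\sup$ interchanged: for a non-increasing $\gamma$ one has $\inf_{t>x}\gamma(t)=\gamma(1)$ and $\sup_{t<x}\gamma(t)=\gamma(0)$, so as literally written $r(x)\supseteq[\gamma(1),\gamma(0)]$ for every interior $x$ and $R$ is a solid rectangle; you mean the one-sided limits $[\sup_{t>x}\gamma(t),\,\inf_{t<x}\gamma(t)]=[\gamma(x^+),\gamma(x^-)]$. Second, the non-increasingness of the symmetrized union, which you rightly call the main obstacle, is only sketched --- but your two displayed facts close it completely, and in fact more is true: the completed graph is already symmetric, so the union adds nothing and no bookkeeping with Lemmas \ref{lemconv} and \ref{lemconv2} is needed. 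Indeed, since $\{v\mid U(t,v)<e\}$ is a down-set, $w<\gamma(t)$ implies $U(t,w)<e$ and $w>\gamma(t)$ implies $U(t,w)\ge e$. Now let $\gamma(x_0^+)\le y_0\le\gamma(x_0^-)$. For every $w<y_0$ and every $t<x_0$ we have $w<y_0\le\gamma(t)$, hence $U(w,t)<e$ and so $\gamma(w)\ge x_0$; therefore $\gamma(y_0^-)\ge x_0$. For every $w>y_0$ and every $t>x_0$ we have $w>y_0\ge\gamma(t)$, hence $U(w,t)\ge e$ and so $\gamma(w)\le x_0$; therefore $\gamma(y_0^+)\le x_0$. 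Thus $x_0\in[\gamma(y_0^+),\gamma(y_0^-)]$, which is exactly the symmetry of the completed graph, and the boundary conventions handle $(0,1)$ and $(1,0)$. With the corrected limits and this one added paragraph your proof is complete and arguably more economical than the paper's case analysis.
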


\begin{proof}
We will define the set $R^*=\{(x,y)\in \uint^2 \mid \text{ $U$ is non-continuous in $(x,y)$}\}.$ Then due to the commutativity of $U$
the set $R^*$ is symmetric, i.e., $(x,y)\in R^*$ if and only if $(y,x)\in R^*.$
If we   define a set-valued function $r\colon \uint \lran $ $\mathcal{P}(\uint)$ by
\begin{equation} r(x)=\begin{cases}
\{1\} &\text{if $x\in\opint{0,B},$ } \\
 \{0\} &\text{if $x\in\opint{A,1},$ } \\
 \cint{0,B} &\text{if $x=1,$ } \\
 \cint{A,1} &\text{if $x=0,$ } \\
 \{y\mid U(x,y)=e\}  &\text{if $x\in\opint{a,d}\cup \{e\},$ } \\
 \{y\mid (x,y)\in R^*\} &\text{otherwise} \end{cases}\label{eqr} \end{equation}
then $r$ is a symmetric set-valued function (see Remark \ref{pomfun}).
Since $u_x$ is continuous if and only if $x\in \lint{0,B}\cup \opint{a,d}\cup \{e\} \cup \rint{A,1}$  (which follows from Lemma  \ref{lemconux} and Proposition \ref{procon}) Lemma \ref{lemnone} implies that
$r$ is surjective. Moreover, it is evident that if $U$ is non-continuous in $(x_0,y_0)$ then $x_0\in r(y_0).$

We will further define the set $$P=\{(x,y)\in \uint^2\mid \text{$u_x$ is continuous in $y$ and $u_y$ is continuous in $x$}\}.$$
Assume $x_1<x_2$ and  $y_1\in r(x_1),$   $y_2\in r(x_2).$

\noindent {\bf Case $1$}:
If $(x_1,y_1),(x_2,y_2)\in R\setminus P$  then Proposition \ref{promon} implies $y_1\geq y_2.$

\noindent {\bf Case $2$}:
Assume $(x_1,y_1) \in  P\cap R,$ $(x_2,y_2)\in R\setminus P.$ Then Proposition   \ref{prononco} implies that either
$(x_3,y_1)\in R\setminus P$ for some $x_3\in \uint,$ $x_1<x_3<x_2,$ or $(x_1,y_3)\in R\setminus P$ for some $y_3,$ $y_3<y_1.$
Now since $(x_2,y_2)\in R\setminus P$ the case when $(x_3,y_1)\in R\setminus P$ implies by Proposition \ref{promon}
$y_1\geq y_2.$ In the case when $(x_1,y_3)\in R\setminus P$ we have $y_1>y_3\geq y_2.$

\noindent {\bf Case $3$}:
Assume  $(x_2,y_2) \in  P\cap R$ and $(x_1,y_1)\in R\setminus P.$ This case can be shown similarly as  the Case $2.$

\noindent {\bf Case $4$}:
Assume  $(x_1,y_1),(x_2,y_2)\in P\cap R.$ Then Proposition   \ref{prononco} implies that either
$(x_4,y_2)\in R\setminus P$ for some $x_4\in \uint,$ $x_3<x_4<x_2$ ($x_1<x_4<x_2$), or $(x_2,y_4)\in R\setminus P$ for some $y_4,$ $y_4>y_2.$
If $(x_3,y_1)\in R\setminus P$ and $(x_4,y_2)\in R\setminus P$ we have $y_1\geq y_2.$ If $(x_3,y_1)\in R\setminus P$
and $(x_2,y_4)\in R\setminus P$ we have $y_1\geq y_4>y_2.$ If   $(x_1,y_3)\in R\setminus P$ and  $(x_4,y_2)\in R\setminus P$ we have $y_1>y_3\geq y_2.$ If  $(x_1,y_3)\in R\setminus P$ and  $(x_2,y_4)\in R\setminus P$ we have $y_1>y_3\geq y_4>y_2.$

\

\noindent Therefore in all cases $y_1\geq y_2$ and thus we have shown that $r$ is non-increasing on $\cint{B,a}\cup \cint{d,A}.$ Since $r$ is evidently non-increasing also on $\lint{0,B}\cup \opint{a,d} \cup \{e\} \cup \rint{A,1}$ we see that  $r$ is non-increasing.

 \end{proof}

\begin{remark}
$U$ need not to be non-continuous in all points of $R.$
From the previous proof we see that $U$ is continuous in all points from  $\{x\}\times \uint$ for all $x\in \lint{0,B}\cup \opint{a,d} \cup \{e\} \cup \rint{A,1}.$
The symmetric   non-increasing  set-valued function from the previous theorem need not to be unique. The differences can appear on $\opint{a,d}^2.$
However, if we require additionally that $U(x,y)=e$ implies $(x,y)\in R$ for all $(x,y)\in \uint^2,$ such a set-valued function is uniquely given and we will call such a  set-valued function the \emph{characterizing} set-valued function of a uninorm $U$ for $U\in \mathcal{U}.$
 \label{remthree}
\end{remark}

\begin{example}
Assume a representable uninorm  $U_1\colon \uint^2\lran \uint$  and a continuous t-norm $T\colon \uint^2\lran \uint$ and a continuous t-conorm
$S\colon \uint^2\lran \uint.$ For simplicity we will assume that $\frac{1}{2}$ is the neutral element of $U_1$ and that $U_1(x,1-x)=\frac{1}{2}$ for all
$x\in \opint{0,1}.$
Let  $U_1^*$ be a linear transformation of $U_1$ to $\cint{\frac{1}{3},\frac{2}{3}}^2,$
let $T^*$ be a linear transformation of $T$ to $\cint{0,\frac{1}{3}}^2$ and let $S^*$
be a linear transformation of $S$ to $\cint{\frac{2}{3},1}^2.$
Then the ordinal sum of semigroups $G_{\alpha}= (\cint{0,\frac{1}{3}},T^*),$
$G_{\beta}= (\cint{\frac{1}{3},\frac{2}{3}},U_1^*),$
$G_{\gamma}= (\cint{\frac{2}{3},1},S^*),$ with $ \gamma < \alpha  < \beta ,$
is a semigroup $(\uint,U),$ where $U$ is a uninorm, $U \in \mathcal{U}.$
 On Figure \ref{figcom} we can see the characterizing set-valued function $r$ of $U$ as well as its set of discontinuity points.
\label{exla}
\end{example}

\begin{figure}[htb] \hskip2cm
\begin{picture}(150,150)
\put(0,0){ \line(1,0){150} }
\put(0,0){ \line(0,1){150} }
\put(150,150){ \line(-1,0){150} }
\put(150,150){ \line(0,-1){150} }

\put(0,50){ \line(1,0){100} }
\put(50,0){ \line(0,1){100} }
\put(0,100){ \line(1,0){150} }
\put(100,0){ \line(0,1){150} }

\put(70,70){$U_1^*$}
\put(25,20){$T^*$}
\put(125,120){$S^*$}
\put(50,120){$\max$}
\put(120,50){$\max$}
\put(70,20){$\min$}
\put(20,70){$\min$}

\linethickness{0.5mm}
\put(0,100){ \line(1,0){50} }

\put(100,0){ \line(0,1){50} }
\end{picture}
 \hskip2cm
\begin{picture}(150,150)
\put(0,0){ \line(1,0){150} }
\put(0,0){ \line(0,1){150} }
\put(150,150){ \line(-1,0){150} }
\put(150,150){ \line(0,-1){150} }

\put(0,50){ \line(1,0){100} }
\put(50,0){ \line(0,1){100} }
\put(0,100){ \line(1,0){150} }
\put(100,0){ \line(0,1){150} }

\put(50,100){ \line(1,-1){50} }

\put(62,62){$U_1^*$}
\put(82,82){$U_1^*$}
\put(25,20){$T^*$}
\put(125,120){$S^*$}
\put(50,120){$\max$}
\put(120,50){$\max$}
\put(70,20){$\min$}
\put(20,70){$\min$}

\linethickness{0.5mm}
\put(0,100){ \line(1,0){50} }
\put(0,100){ \line(0,1){50} }
\put(100,0){ \line(1,0){50} }
\put(100,0){ \line(0,1){50} }
\end{picture}
\caption{The uninorm $U$ from Example \ref{exla}. Left: the bold lines denote the points of discontinuity of $U.$
Right: the oblique and bold lines denote the characterizing set-valued function of $U.$}
\label{figcom}
\end{figure}

\begin{remark}
It is easy to see that for $U\in \mathcal{U}$ its characterizing set-valued function $r$ divides the uninorm $U$ into two parts:
$U$ on points below the characterizing set-valued function attains values smaller than $e,$ and $U$ on points above the characterizing set-valued function attains values bigger than $e.$
\end{remark}

\begin{proposition}
Let $U\colon \uint^2\lran \uint$ be a  uninorm, $U\in \mathcal{U}.$  Then in each point $(x_0,y_0)\in \uint^2$ the uninorm $U$ is either left-continuous or right continuous.
\end{proposition}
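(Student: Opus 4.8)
The plan is to base the whole argument on the trichotomy of the value $U(x_0,y_0)$ relative to $e$, so that no case analysis on the position of $(x_0,y_0)$ in the unit square is needed. First I would observe that if $U(x_0,y_0)=e$ then $U$ is in fact \emph{continuous} at $(x_0,y_0)$, hence both left- and right-continuous: by Lemma \ref{eeq} either $x_0=y_0=e$, in which case continuity was recalled at the beginning of this subsection, or the point lies off the diagonal ($x_0<e<y_0$ or $x_0>e>y_0$) and Proposition \ref{procon} places $(x_0,y_0)$ outside the exceptional square $(\lint{0,a}\cup\rint{b,1})^2$, giving continuity there. Thus it remains to treat the two symmetric cases $U(x_0,y_0)<e$ and $U(x_0,y_0)>e$; I would prove the first gives left-continuity and note the second is analogous (via right-continuity), using commutativity and the dual half of Remark \ref{remlrint}.

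For the case $v:=U(x_0,y_0)<e$, the goal is to show that the lower-left corner limit equals the value, i.e.
\[
\sup\{U(x,y)\mid x<x_0,\ y<y_0\}=v .
\]
The inequality ``$\le$'' is immediate from monotonicity. For ``$\ge$'' the key observation is a \emph{uniform} one-sided continuity of the vertical sections: for every $y<y_0$ monotonicity gives $U(x_0,y)\le U(x_0,y_0)=v<e$, so by Remark \ref{remlrint} the function $u_y$ is left-continuous at $x_0$, whence $\sup_{x<x_0}U(x,y)=\lim_{x\uparrow x_0}U(x,y)=U(x_0,y)$. Taking the supremum over $y<y_0$ and using the general identity $\sup_{x,y}=\sup_y\sup_x$ reduces the corner limit to $\sup_{y<y_0}U(x_0,y)=\lim_{y\uparrow y_0}U(x_0,y)$, which equals $v$ because $u_{x_0}(y_0)=v<e$ makes $u_{x_0}$ left-continuous at $y_0$ by the same remark. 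Hence the corner limit is exactly $v$ and $U$ is left-continuous at $(x_0,y_0)$.

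The step I expect to be the crux is precisely the passage from the separate one-sided continuity of the sections to the joint (corner) limit: it works only because the sign condition $U(x_0,y)<e$ holds \emph{simultaneously} for all $y\le y_0$, which is what lets Remark \ref{remlrint} be applied to the whole family $\{u_y\}_{y<y_0}$ at once and makes the iterated supremum collapse; a pointwise statement at $(x_0,y_0)$ alone would not suffice. Degenerate boundary cases ($x_0=0$ or $y_0=0$) are harmless, since then the approach region from the lower left is empty or reduces to a single section and left-continuity holds trivially. The case $v>e$ is obtained verbatim after replacing suprema by infima, ``left'' by ``right'', and the first clause of Remark \ref{remlrint} by its second; combining the three cases yields the claim for every $(x_0,y_0)\in\uint^2$.
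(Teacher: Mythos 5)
Your proposal is correct and follows essentially the same route as the paper's proof: in both arguments the key point is that the sign condition $U(x_0,y_0)<e$ (resp.\ $>e$) propagates by monotonicity to the whole lower-left (resp.\ upper-right) quadrant, so Remark \ref{remlrint} applies to every relevant section and the two one-variable one-sided limits combine, via monotonicity, into the joint one-sided limit --- your iterated-supremum computation is just the paper's $\varepsilon/2$-telescoping estimate written in terms of suprema. The only (harmless) difference is that you handle the case $U(x_0,y_0)=e$ slightly more completely, splitting off $x_0=y_0=e$ before invoking Proposition \ref{procon}.
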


\begin{proof}
From Proposition \ref{rl} we know that for all $x\in \uint$ the function $u_x$ is either left-continuous or right continuous.
If $(x_0,y_0)$ is the point of continuity of $U$ the claim is trivial.
 Thus suppose that $(x_0,y_0)$ belongs to the graph of the characterizing set-valued function $r$ of $U.$
 If $U(x_0,y_0)=e$ then by Proposition \ref{procon} the uninorm $U$ is continuous in $(x_0,y_0)$ and thus either
 $U(x_0,y_0)<e$ or $U(x_0,y_0)>e.$
If $U(x_0,y_0)<e$ then for all $x\leq x_0,$ $y\leq y_0$
also $U(x,y)<e$  and thus $u_x$ is left-continuous in $y$ and $u_y$ is left-continuous in $x$ (see Remark \ref{remlrint}).
Now for any $\varepsilon>0$ there exists $\delta_1>0$ such that
$\vert U(x_0-\delta_1,y_0) - U(x_0,y_0) \vert <\frac{\varepsilon}{2}.$
Since also $u_{x_0-\delta_1}$ is left-continuous in $y_0$ there exists $\delta_2>0$ such that
$\vert U(x_0-\delta_1,y_0-\delta_2) - U(x_0-\delta_1,y_0) \vert<\frac{\varepsilon}{2}.$
The monotonicity of $U$ then implies that
\begin{multline*} 0 \leq   U(x_0,y_0) -  U(x_0-\delta_1,y_0-\delta_2) = \\  U(x_0,y_0)  - U(x_0-\delta_1,y_0) +  U(x_0-\delta_1,y_0)-  U(x_0-\delta_1,y_0-\delta_2)
< \varepsilon.\end{multline*} Taking $\delta = \min(\delta_1,\delta_2),$ by the monotonicity of $U$ we have shown that for each $\varepsilon>0$ there exists a $\delta>0$
such that if $x\in \cint{x_0-\delta,x_0}$ and $y\in \cint{y_0-\delta,y_0}$
 we have
 $\vert U(x,y)-U(x_0,y_0)\vert<\varepsilon,$
 i.e., that
  $U$ is left-continuous in $(x_0,y_0).$ Similarly,
if  $U(x_0,y_0)>e$ then $U$ is right-continuous in $(x_0,y_0).$
\end{proof}

The previous proposition and the construction of the characterizing set-valued function $r$ of a uninorm $U$ implies the following.

\begin{corollary}
Let $U\colon \uint^2\lran \uint$ be a  uninorm, $U\in \mathcal{U}.$ Then there exists a symmetric,  surjective, non-increasing  set-valued function $r$ on $\uint$ such that $U$ is continuous on
$\uint^2\setminus R,$ where $R=G(r)$ and if $U(x,y)=e$ then $(x,y)\in R.$
 Moreover, in each point $(x,y)\in \uint^2$ the uninorm $U$ is either left-continuous or right-continuous.
\end{corollary}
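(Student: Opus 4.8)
The plan is to assemble the corollary from three facts already in hand: Theorem \ref{mufu}, the characterizing set-valued function isolated in Remark \ref{remthree}, and the Proposition proved immediately before the corollary. Theorem \ref{mufu} already supplies a symmetric, surjective, non-increasing set-valued function $r$ on $\uint$, defined explicitly in \eqref{eqr}, whose graph $R=G(r)$ contains every point of discontinuity of $U$; consequently $U$ is continuous on $\uint^2\setminus R$. Thus the first two structural claims of the corollary are secured as soon as we fix this particular $r$, and the only remaining work is to check that this same $r$ also satisfies the level-set condition and then to quote the left/right continuity statement.

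To obtain the extra requirement that $U(x,y)=e$ implies $(x,y)\in R$, I would read off the construction \eqref{eqr} directly. By Lemma \ref{noid} and Lemma \ref{eeq}, together with Remark \ref{remad}, whenever $U(x,y)=e$ we have either $x=y=e$, or $x\in\opint{a,d}$ with correspondingly $y\in\opint{a,d}$. On $\opint{a,d}\cup\{e\}$ the rule \eqref{eqr} sets $r(x)=\{y\mid U(x,y)=e\}$, so each such pair $(x,y)$ lies in $R$ by definition (in particular $r(e)=\{e\}$ places $(e,e)$ in $R$). This is exactly the characterizing set-valued function singled out in Remark \ref{remthree}, and it is precisely the choice making the implication $U(x,y)=e\Rightarrow(x,y)\in R$ hold while retaining symmetry, surjectivity and monotonicity.

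The ``moreover'' clause is then nothing other than the statement of the Proposition proved just before the corollary, asserting that at every point $(x_0,y_0)\in\uint^2$ the uninorm $U$ is either left-continuous or right-continuous; I would simply cite it. Combining the two pieces gives the corollary.

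I do not anticipate a genuine obstacle, since the corollary is an assembly of already-established results; the one point deserving a sentence of care is confirming that the single function $r$ from \eqref{eqr} can play all three roles at once, i.e.\ that imposing $U(x,y)=e\Rightarrow(x,y)\in R$ does not clash with the monotonicity and surjectivity established in Theorem \ref{mufu}. This follows because all such $e$-level pairs sit inside $\opint{a,d}^2\cup\{(e,e)\}$, which is exactly the region where Remark \ref{remthree} locates the freedom in choosing $r$, so pinning down $r$ there resolves the ambiguity without disturbing its behaviour elsewhere.
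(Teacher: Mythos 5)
Your proposal is correct and matches the paper's own (implicit) argument: the paper derives this corollary in one line by combining Theorem \ref{mufu} with the characterizing set-valued function of Remark \ref{remthree} (whose rule \eqref{eqr} already places every $e$-level pair in $R$ via Remark \ref{remad}) and the immediately preceding proposition on left/right continuity. Your additional check that fixing $r$ on $\opint{a,d}^2\cup\{(e,e)\}$ does not disturb symmetry, surjectivity or monotonicity is exactly the point the paper leaves tacit.
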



\subsection{The sufficiency part}
\label{sec3.2}

In this part we will show that if for a uninorm $U$ there exists a symmetric,  surjective, non-increasing set-valued function $r$ on
$\uint$ such that $U$ is continuous on
$\uint^2\setminus R,$ where $R=G(r),$ and  $U(x,y)=e$ implies $(x,y)\in R,$ then    $U\in \mathcal{U}$ if and only if in each point $(x,y)\in \uint^2$ the uninorm $U$ is either left-continuous or right-continuous.

We will denote the set of all uninorms  $U\colon \uint^2\lran \uint$ such that  $U$ is continuous on
$\uint^2\setminus R,$ where $R=G(r)$ and $r$ is a symmetric,  surjective, non-increasing set-valued function such that  $U(x,y)=e$ implies $(x,y)\in R,$
by $\mathcal{UR}.$
First we will show that there exists a uninorm $U\in \mathcal{UR}$ such that $U\notin \mathcal{U}$.

\begin{example}
Let $U\colon \uint^2\lran \uint$ be   given by
$$U(x,y)=\begin{cases} 0 &\text{if $\max(x,y)<e,$} \\
x &\text{if $y=e,$} \\
y &\text{if $x=e,$} \\
\max(x,y) &\text{otherwise.} \end{cases}$$
Then Proposition \ref{mmset} implies that $U\in \mathcal{U}_{\max}$ is a uninorm, where the underlying t-norm is the drastic product and
the underlying t-conorm is the maximum. This uninorm is non-continuous in points from $\{e\}\times \cint{0,e} \cup \cint{0,e} \times \{e\}.$
Thus the corresponding set-valued function is given by (see Figure \ref{Figdra})
$$r(x) = \begin{cases} \cint{e,1} &\text{if $x=0,$} \\
e &\text{if $x\in \opint{0,e},$} \\
\cint{0,e} &\text{if $x=e,$} \\
0 &\text{otherwise.} \end{cases}$$ Since $U(x,y)=e$ implies $x=y=e$
we see that $U$ is continuous on
$\uint^2\setminus R,$ where $R=G(r)$ and $r$ is a symmetric,  surjective, non-increasing set-valued function such that  $U(x,y)=e$ implies $(x,y)\in R.$
However, the drastic product t-norm is not continuous and thus  $U\notin \mathcal{U}.$ \label{exdra}
\end{example}

\begin{figure}[htb]  \begin{center}
\begin{picture}(150,150)
\put(0,0){ \line(1,0){150} }
\put(0,0){ \line(0,1){150} }
\put(150,150){ \line(-1,0){150} }
\put(150,150){ \line(0,-1){150} }
\put(0,75){ \line(1,0){150} }
\put(75,0){ \line(0,1){150} }
\linethickness{0.5mm}
\put(0,75){ \line(1,0){75} }
\put(0,75){ \line(0,1){75} }
\put(75,0){ \line(0,1){75} }
\put(75,0){ \line(1,0){75} }
\put(30,30){$0$}
\put(110,110){$\max$}
\put(110,30){$\max$}
\put(30,110){$\max$}
\end{picture}
\end{center}
\caption{The uninorm $U$  from Example \ref{exdra}. The bold lines denote the characterizing set-valued function $r$ of $U.$} \label{Figdra}
\end{figure}

Assume $U\in \mathcal{UR}.$ Then for the corresponding characterizing set-valued function $r$ we have $(e,e)\in G(r).$ Denote
$$D=\{e\}\times \uint \cup \uint\times\{e\}.$$
We have two possibilities: either $G(r)\cap D=\{(e,e)\},$ or $\mathrm{Card}(G(r)\cap D)>1.$ First we will assume the case when $G(r)\cap D=\{(e,e)\}.$
Then $T_U$  ($S_U$) is continuous in all points from $\cint{0,e}^2$ ($\cint{e,1}^2$) except possibly the point $(e,e)$ and we have the following result.

\begin{lemma}
Let $T\colon \uint^2\lran \uint$ be a t-norm which is continuous on $\uint^2 \setminus \{(1,1)\}.$ Then $T$ is continuous on $\uint^2.$
\label{onepo}
\end{lemma}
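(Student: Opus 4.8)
The plan is to reduce the assertion to a purely one-dimensional statement about the diagonal section $z\mapsto T(z,z)$, and then to exploit that the hypothesis already grants continuity at every boundary point $(1,y)$ with $y<1$. Since $T$ is assumed continuous on $\uint^2\setminus\{(1,1)\}$, it suffices to prove continuity at the single point $(1,1)$. Here $T(1,1)=1$ and, since $1$ is the neutral element and $T$ is non-decreasing, $T(x,y)\le\min(x,y)$ for all $x,y\in\uint$. For a pair $(x,y)$ with $m:=\min(x,y)$ close to $1$, monotonicity gives the sandwich $T(m,m)\le T(x,y)\le m$, so $T(x,y)$ is squeezed between $T(m,m)$ and $m$. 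Hence continuity of $T$ at $(1,1)$ is equivalent to the scalar condition $\lim_{z\to 1^{-}}T(z,z)=1$, and this is the only thing that must be established.

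Set $L:=\lim_{z\to 1^{-}}T(z,z)$; this limit exists because $z\mapsto T(z,z)$ is non-decreasing (if $z\le z'$ then $T(z,z)\le T(z',z)\le T(z',z')$), and in particular $T(z,z)\le L$ for every $z<1$. I would argue by contradiction: suppose $L<1$ and fix any $y\in\opint{L,1}$. The point $(1,y)$ is different from $(1,1)$, so the hypothesis yields continuity of $T$ there; since $T(1,y)=y>L$, we get $\lim_{x\to 1^{-}}T(x,y)=y>L$. Consequently there is some $x^{*}<1$ with both $x^{*}\ge y$ and $T(x^{*},y)>L$. Monotonicity in the second argument then gives $T(x^{*},x^{*})\ge T(x^{*},y)>L$, which contradicts $T(x^{*},x^{*})\le L$. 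Therefore $L=1$.

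Combining the two steps, $\lim_{z\to 1^{-}}T(z,z)=1$ forces continuity of $T$ at $(1,1)$ as well, so $T$ is continuous on all of $\uint^2$. The only delicate point — and the reason the separate-continuity criterion of Proposition \ref{KDpro} cannot simply be invoked — is that $(1,1)$ is a corner of the square rather than an interior point, so I deliberately translate joint continuity at the corner into the scalar statement $\lim_{z\to 1^{-}}T(z,z)=1$. The key leverage is that the hypothesis, although it says nothing directly about $(1,1)$, does guarantee continuity at the neighbouring edge points $(1,y)$ with $y<1$; feeding the value $T(1,y)=y$ back through the monotonicity of the diagonal is exactly what rules out a jump.
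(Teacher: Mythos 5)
Your argument is correct and follows essentially the same route as the paper: both reduce discontinuity at the corner to a jump of the diagonal section $z\mapsto T(z,z)$ at $1$, and then derive a contradiction from the assumed continuity at an edge point adjacent to $(1,1)$ combined with monotonicity. Your version is marginally more streamlined in that it uses continuity at $(1,y)$ directly instead of running an intermediate-value argument on the section $y\mapsto T(1-\tfrac{\delta}{2},y)$, but the underlying mechanism is identical.
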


\begin{proof} Assume that $T$ is not continuous in $(1,1).$ Then there exist
 two sequences $\{a_n\}_{n\in \mathbb{N}},$ $a_n\in \opint{0,1}$ and $\{b_n\}_{n\in \mathbb{N}},$ $b_n\in \opint{0,1}$  such that
$\lim\limits_{n\lran \infty} a_n = \lim\limits_{n\lran \infty} b_n =1 $ and
$\lim\limits_{n\lran \infty}  T(a_n,b_n)<1.$ Since $T(a_n,b_n)\geq T(\min(a_n,b_n),\min(a_n,b_n))$ we see that there exists a sequence
$\{c_n\}_{n\in \mathbb{N}},$ $c_n \in \opint{0,1},$ $\lim\limits_{n\lran \infty} c_n =1 $ such that $\lim\limits_{n\lran \infty}  T(c_n,c_n)=1-\delta<1,$ for some
$\delta>0.$ Since $T$ is a t-norm we have $T(1-\frac{\delta}{2},1) = 1-\frac{\delta}{2}$ and necessarily
$T(1-\frac{\delta}{2},1-\frac{\delta}{2})\leq  1-\delta.$ Since $T$ is continuous on $\uint^2 \setminus \{(1,1)\}$ there exists
an $\varepsilon>0$   such that $T(1-\frac{\delta}{2},1-\varepsilon)=1-\frac{2\delta}{3}$ and the monotonicity of $T$ implies
$\varepsilon<\frac{\delta}{2}.$ Thus $$1-\frac{2\delta}{3}=T(1-\frac{\delta}{2},1-\varepsilon)\leq T(1-\varepsilon,1-\varepsilon)\leq 1-\delta,$$
what is a contradiction.
\end{proof}

By duality between t-norms and t-conorms we get the following.

\begin{lemma}
Let $S\colon \uint^2\lran \uint$ be a t-conorm which is continuous on $\uint^2 \setminus \{(0,0)\}.$ Then $S$ is continuous on $\uint^2.$
\label{onepo1}
\end{lemma}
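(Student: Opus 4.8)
The plan is to follow the duality hint literally: transfer Lemma \ref{onepo} to its dual statement via the standard complement map. Given a t-conorm $S\colon \uint^2\lran \uint$ that is continuous on $\uint^2\setminus\{(0,0)\}$, I would define its dual operation $T\colon \uint^2\lran \uint$ by
$$T(x,y)=1-S(1-x,1-y).$$
By the duality between t-norms and t-conorms recalled in Section \ref{sec2}, $T$ is a t-norm (commutativity, associativity and monotonicity are inherited, and $1$ becomes its neutral element since $0$ is the neutral element of $S$). So the first step is simply to record that $T$ is a genuine t-norm.

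The key observation is that the reflection $\phi\colon \uint^2\lran \uint^2$, $\phi(x,y)=(1-x,1-y)$, is a homeomorphism of the square onto itself, and it sends the distinguished point $(0,0)$ to $(1,1)$. Since $T=(t\mapsto 1-t)\circ S\circ \phi$ is a composition of $\phi$, of $S$, and of the continuous complement $t\mapsto 1-t$, and the only point where $S\circ\phi$ can fail to be continuous is $\phi^{-1}(0,0)=(1,1)$, I would conclude that $T$ is continuous on $\uint^2\setminus\{(1,1)\}$. At this point Lemma \ref{onepo} applies directly and yields that $T$ is continuous on all of $\uint^2$.

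Finally, the same reflection-and-complement recovers $S$ from $T$, namely $S(x,y)=1-T(1-x,1-y)$, so continuity of $T$ on the whole square forces continuity of $S$ on the whole square, which is the claim. I do not expect any real obstacle here: the argument is a routine transport of Lemma \ref{onepo} along a homeomorphism. The only points deserving a sentence of care are that $T$ is indeed a t-norm and that $\phi$ is a self-homeomorphism of $\uint^2$ identifying $(0,0)$ with $(1,1)$; both are immediate from the definitions.
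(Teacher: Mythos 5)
Your proposal is correct and is exactly the argument the paper intends: the paper dispatches this lemma with the single remark ``By duality between t-norms and t-conorms we get the following,'' and your write-up simply makes that duality transport explicit via the reflection $(x,y)\mapsto(1-x,1-y)$ and Lemma \ref{onepo}. No differences in substance.
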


From the two previous results we see that if $U\in \mathcal{UR}$ and $G(r)\cap D=\{(e,e)\}$
 then  $U\in \mathcal{U}.$

Further we will suppose that $\mathrm{Card}(G(r)\cap D)>1.$
Then we obtain the following result.

\begin{lemma}
Let $U\colon \uint^2\lran \uint$ be a uninorm, $U\in \mathcal{UR},$ $U\notin \mathcal{U}.$
Then there exists a point $(x,y)\in \uint^2$ such that $U$ is neither left-continuous, nor right-continuous in $(x,y).$
\end{lemma}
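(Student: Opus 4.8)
The plan is to exploit the hypothesis $U\notin\mathcal U$ directly: at least one of the underlying operations $T_U,S_U$ is discontinuous, and by the duality between t-norms and t-conorms (Lemma \ref{onepo1}) I may assume $T_U$ is discontinuous, the case of $S_U$ being handled by the mirror argument on $\cint{e,1}^2$. Viewing $U\vert_{\cint{0,e}^2}$ as a scaled copy of $T_U$, the contrapositive of Lemma \ref{onepo} guarantees a discontinuity of $U$ at a point of $\cint{0,e}^2$ different from $(e,e)$; since $U\in\mathcal{UR}$ is continuous off $R=G(r)$, this point lies on $R$. Using that $r$ is non-increasing and that $(e,e)\in R$ (so $e\in r(e)$), I would first show $R$ contains no point $(x,y)$ with $x<e$ and $y<e$: if $x<e$ then every element of $r(x)$ dominates every element of $r(e)$, hence is $\geq e$. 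Thus $R\cap\cint{0,e}^2$ lies on the edges $x=e$ and $y=e$, and by commutativity (which makes $R$ symmetric) I may take the discontinuity point to be $(e,y_0)$ with $y_0<e$.

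Next I would prove $U$ is not left-continuous at $(e,y_0)$. On $\cint{0,e}^2$ one has $U(x,y)\leq\min(x,y)$, while $U(e,y_0)=y_0$ since $e$ is neutral. The monotone lower-left limit $L=\lim_{x\nearrow e,\,y\nearrow y_0}U(x,y)$ satisfies $L\leq y_0$, and every in-square approach has $\limsup U\leq y\to y_0$; hence the discontinuity of $U\vert_{\cint{0,e}^2}$ at the edge point $(e,y_0)$ can only be a downward jump, i.e. $L<y_0$. (In particular $y_0>0$, as the squeeze $U\leq\min$ forces continuity at $(e,0)$.) Since $L<y_0=U(e,y_0)$, left-continuity fails.

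The crux is the complementary failure, that $U$ is not right-continuous at $(e,y_0)$. The approaching points now have $x>e$ and $y$ slightly above $y_0$ with $y<e$, so they lie in the off-diagonal block $\cint{e,1}\times\cint{0,e}$, where the order bound $\min\leq U\leq\max$ only gives $U\geq y$ and is too weak. Instead I would show these points lie strictly above $R$: for $x>e$ the non-increasingness yields $\sup r(x)\leq\inf r(e)\leq y_0<y$, so $(x,y)\notin R$. Fixing such an $x$, the segment $\{x\}\times\cint{y,1}$ misses $R$ entirely, so $U$ is continuous and never equal to $e$ on it; as it is connected and $U(x,1)=1>e$ (because $U(x,1)\geq U(e,1)=1$), the intermediate value property forces $U>e$ on the whole segment, in particular $U(x,y)>e$. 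Hence $\lim_{x\searrow e,\,y\searrow y_0}U(x,y)\geq e>y_0$, so right-continuity also fails, and $(e,y_0)$ is the required point.

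I expect this right-continuity failure to be the main obstacle: unlike the left side, it is not visible from the underlying t-norm and cannot be read off from the pointwise bound $\min\leq U\leq\max$; it must instead be extracted from the global sign structure of $U$ relative to $R$ via the connectedness and intermediate-value argument above. The two enabling reductions—locating the discontinuity via Lemma \ref{onepo} and pinning it to an edge of $\cint{0,e}^2$ through the non-increasingness of $r$—are comparatively routine, and the dual treatment of the $S_U$ case requires only the symmetric statements (Lemma \ref{onepo1}, the bound $U\geq\max$ on $\cint{e,1}^2$, and ``below $R$ implies $U<e$'').
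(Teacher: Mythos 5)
Your proposal is correct and follows essentially the same route as the paper's own proof: Lemma \ref{onepo} together with the monotonicity of $r$ forces the discontinuity of the underlying t-norm onto the cross $\{e\}\times\lint{0,e}\cup\lint{0,e}\times\{e\}$, the continuity of $U$ off $R$ combined with ``$U=e$ only on $R$'' and a connectedness/intermediate-value argument yields $U>e$ just across that segment (killing right-continuity), and the t-norm's own jump kills left-continuity. The only difference is organizational: the paper establishes the failure of right-continuity on the whole segment $\opint{x_0,e}\times\{e\}$ and then locates a left-discontinuity somewhere on it, whereas you pin both failures to the single point where the restricted t-norm jumps, your squeeze argument via $U\le\min$ on $\cint{0,e}^2$ being a slightly more explicit version of the paper's ``similarly as in Lemma \ref{onepo}'' step.
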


\begin{proof} Since $U\notin \mathcal{U}$ Lemmas \ref{onepo} and \ref{onepo1} imply that $\mathrm{Card}(G(r)\cap D)>1.$
Then there exists an $x_1\in \uint,$ $x_1\neq e$ such that $(x_1,e)\in G(r).$ We will suppose that $x_1<e$ (the case when $x_1>e$ is analogous).
Let $$x_0=\inf\{x\in \cint{0,e}\mid (x,e)\in G(r)\}.$$
Then the monotonicity of $r$  implies that   $S_U$ is continuous and
$\rint{x_0,e}\times \{e\} \subset G(r).$ Moreover, $U(x,y)=e$ implies $x=y=e$ for all $x,y\in \uint.$
Since $U$ is continuous on $\rint{x_0,e} \times \rint{e,1} \cup \rint{e,1} \times \rint{x_0,e}$
we see  that $U(x,y)>e$ for all $x\in \rint{x_0,e},$ $y\in \rint{e,1}.$
 On the other hand, the neutral element $e$ and the monotonicity of
$U$ implies $U(x,y)\in \cint{x,y}$ for all $x\in \rint{x_0,e},$ $y\in \rint{e,1}.$
Thus for all $x\in \opint{x_0,e}$ we have $\lim\limits_{s\lran e^+} U(x,s)=e.$ Therefore on $\opint{x_0,e}$ the uninorm $U$ is not right-continuous.
Since $U\notin \mathcal{U}$ and $T_U$  is continuous on $\lint{0,1}^2$ we see that $U$ is not left-continuous in some point $(x,e)$ for
$x\in \cint{x_0,e}.$ Now similarly as in Lemma \ref{onepo} we can show that  $U$ is not left-continuous in some point $(x,e)$ for
$x\in \lint{x_0,e}.$ Finally, the neutral element and the monotonicity of $U$ imply that $U$ is not left-continuous in some point $(x,e)$ for
$x\in \opint{x_0,e}.$ Summarising,  there exists a point $(x,y)\in \uint^2$ such that $U$ is neither left-continuous, nor right-continuous in $(x,y).$
\end{proof}

All previous results can be compiled into the following theorem.

\begin{theorem}
 Let $U\colon \uint^2\lran \uint$ be a uninorm, $U\in \mathcal{UR}.$ Then    $U\in \mathcal{U}$ if and only if in each point $(x,y)\in \uint^2$ the uninorm $U$ is either left-continuous or right-continuous. \label{thco}
\end{theorem}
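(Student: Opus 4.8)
The plan is to note that Theorem \ref{thco} is the capstone of Section \ref{sec3}, and that both implications have already been prepared by the preceding results; thus the proof reduces to a short compilation rather than fresh work. Throughout I may assume the standing hypothesis $U\in\mathcal{UR}$, so $U$ is continuous on $\uint^2\setminus R$ for a symmetric, surjective, non-increasing set-valued function $r$ with $R=G(r)$ and with $U(x,y)=e$ forcing $(x,y)\in R$.

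For the forward implication I would assume $U\in\mathcal{U}$ and simply invoke the Proposition established immediately before this subsection, which asserts that every uninorm in $\mathcal{U}$ is, at each point $(x_0,y_0)\in\uint^2$, either left-continuous or right-continuous. Since the extra hypothesis $U\in\mathcal{UR}$ imposes no obstruction, nothing further is needed in this direction. For the reverse implication I would argue by contraposition. Suppose $U$ is left-continuous or right-continuous at every point of $\uint^2$, and suppose toward a contradiction that $U\notin\mathcal{U}$. Because $U\in\mathcal{UR}$ and $U\notin\mathcal{U}$, the last Lemma produces a point $(x,y)\in\uint^2$ at which $U$ is neither left-continuous nor right-continuous, contradicting the assumed one-sided continuity everywhere. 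Hence $U\in\mathcal{U}$, and the equivalence follows.

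The genuine content of the equivalence lies not in this final assembly but in the two supporting facts being quoted, so the main obstacle is already resolved in the earlier lemmas. When $G(r)\cap D=\{(e,e)\}$, Lemmas \ref{onepo} and \ref{onepo1} rule out an isolated discontinuity of the underlying t-norm at $(1,1)$ (dually of the t-conorm at $(0,0)$), forcing $U\in\mathcal{U}$; when $\mathrm{Card}(G(r)\cap D)>1$, the last Lemma must exhibit a point of genuine two-sided discontinuity. The delicate step in that construction, and the place I expect to require the most care, is combining the failure of right-continuity coming from $\lim_{s\to e^+}U(x,s)=e$ along the segment $\rint{x_0,e}\times\{e\}\subset G(r)$ with the failure of left-continuity inherited from the non-continuous underlying t-norm, via an argument mimicking the proof of Lemma \ref{onepo}. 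For Theorem \ref{thco} itself, however, these facts may be cited directly, and the proof is immediate.
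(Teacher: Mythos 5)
Your proposal is correct and matches the paper's intent exactly: the paper offers no separate argument for Theorem \ref{thco} beyond the remark that ``all previous results can be compiled into'' it, and your assembly --- the forward direction from the proposition on one-sided continuity of uninorms in $\mathcal{U}$, the reverse direction by contraposition via the final lemma (with Lemmas \ref{onepo} and \ref{onepo1} handling the case $G(r)\cap D=\{(e,e)\}$) --- is precisely that compilation.
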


\begin{corollary}
Let $U\colon \uint^2\lran \uint$ be a uninorm. Then $U\in \mathcal{U}$ if and only if $U\in \mathcal{UR}$ and  in each point $(x,y)\in \uint^2$ the uninorm $U$ is either left-continuous or right-continuous.
\end{corollary}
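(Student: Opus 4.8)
The plan is to recognize that this final corollary is a repackaging of two results already established in the section: the concluding corollary of Subsection~\ref{sec3.1} (which handles the necessity direction) and Theorem~\ref{thco} (which handles the sufficiency direction), together with the definition of the class $\mathcal{UR}$. So the strategy is simply to prove each implication by citing the appropriate earlier statement, after matching its conclusion clause-by-clause against the definition of $\mathcal{UR}$.

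First I would prove the forward implication. Assume $U\in\mathcal{U}$. The concluding corollary of the necessity part asserts the existence of a symmetric, surjective, non-increasing set-valued function $r$ on $\uint$ such that $U$ is continuous on $\uint^2\setminus R$ with $R=G(r)$, and such that $U(x,y)=e$ implies $(x,y)\in R$. This is precisely the defining property of membership in $\mathcal{UR}$, so $U\in\mathcal{UR}$. That same corollary also states that in each point $U$ is either left-continuous or right-continuous, which supplies the second clause. Hence $U\in\mathcal{U}$ yields both conditions on the right-hand side.

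For the reverse implication, I would suppose that $U\in\mathcal{UR}$ and that in each point $U$ is either left-continuous or right-continuous. Since $U\in\mathcal{UR}$, Theorem~\ref{thco} applies verbatim: it states that for such a uninorm one has $U\in\mathcal{U}$ if and only if $U$ is everywhere left- or right-continuous. The hypothesis supplies exactly the latter condition, so Theorem~\ref{thco} delivers $U\in\mathcal{U}$ at once.

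The only point requiring any care — and the place where the genuine work of the section has already been invested — is verifying that the set-valued function produced in the necessity direction really satisfies every clause in the definition of $\mathcal{UR}$, in particular the normalization that $U(x,y)=e$ implies $(x,y)\in R$, which is exactly what singles out the \emph{characterizing} set-valued function (cf. Remark~\ref{remthree}). Once that identification is in hand, no further estimates are needed: the statement is a direct logical combination of the preceding corollary and Theorem~\ref{thco}, and I expect no obstacle beyond this bookkeeping.
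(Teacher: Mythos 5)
Your proposal is correct and matches the paper's own (implicit) reasoning: the corollary is stated without proof precisely because it is the direct combination of the concluding corollary of Subsection~\ref{sec3.1} (giving the forward implication, whose conclusion coincides clause-by-clause with the definition of $\mathcal{UR}$ plus the left/right-continuity condition) and Theorem~\ref{thco} (giving the reverse implication). Your identification of the normalization $U(x,y)=e \Rightarrow (x,y)\in R$ as the point where the two statements must be matched is exactly the right bookkeeping.
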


\section{Conclusions}
\label{sec5}

 We have shown that  a uninorm with continuous underlying t-norm and t-conorm is
continuous on $\uint^2\setminus R,$ where $R$ is the graph of some symmetric, surjective,  non-increasing set-valued function.
On the other hand, we have shown also a sufficient condition for a uninorm to have continuous underlying operations.
 In the follow up papers \cite{extord,supja} we will employ these results and using
 the characterizing set-valued function of a uninorm we will show that each uninorm with continuous underlying t-norm and t-conorm can be decomposed into an
 ordinal sum of semigroups related to representable uninorms, continuous Archimedean t-norms, continuous Archimedean t-conorms, internal uninorms and singleton semigroups. Thus these three papers together offer a complete characterization of  uninorms from $\mathcal{U},$ i.e., of uninorms with continuous underlying t-norm and t-conorm.  The applications of these results are expected in all domains where uninorms are used.

 \vskip0.3cm \noindent{\bf \large Acknowledgement} \hskip0.3cm  This work was supported by grants  VEGA 2/0049/14, APVV-0178-11
 and Program Fellowship of SAS.

\end{document}